 \newtheorem{thm}{Theorem}[section]
 \newtheorem{corollary}[thm]{Corollary}
 \newtheorem{lemma}[thm]{Lemma}
 \newtheorem{Proposition}[thm]{Proposition}
 \theoremstyle{definition}
 \theoremstyle{remark}
 \newtheorem{remark}[thm]{Remark}
 \newtheorem{example}{Example}
 \numberwithin{equation}{section}
 \newcommand{\R}{\mathbb{R}}
  \newcommand{\N}{\mathbb{N}}
    \renewcommand{\H}{\mathcal{H}}
\begin{document}

%
%

\title[Cycloids in a Normed Plane]
 {Closed Cycloids in a Normed Plane }

\author[M.Craizer]{Marcos Craizer}

\address{%
Departamento de Matem\'{a}tica- PUC-Rio\br
Rio de Janeiro, RJ, Brasil}
\email{craizer@puc-rio.br}

\author[R.C.Teixeira]{Ralph Teixeira}

\address{%
Departamento de Matem\'atica Aplicada- UFF\br
Niter\'{o}i, RJ, Brasil}
\email{ralph@mat.uff.br}

\author[V.Balestro]{Vitor Balestro}

\address{%
Instituto de Matem\'{a}tica e Estat\'{i}stica- UFF\br
\&\br
CEFET/RJ- campus Nova Friburgo, Brasil}
\email{vitorbalestro@id.uff.br}

\thanks{The first named author wants to thank CNPq for financial support during the preparation of this manuscript. \newline E-mail of the corresponding author: craizer@puc-rio.br}

\subjclass{ 52A10, 52A21, 53A15, 53A40}

\keywords{Minkowski Geometry, Sturm-Liouville equations, evolutes, hypocycloids, curves of constant width, Sturm-Hurwitz theorem, Four vertices theorem,
Six vertices theorem}

\date{April 25, 2016}

\begin{abstract}
Given a normed plane $\mathcal{P}$, we call $\mathcal{P}$-cycloids the planar curves which are
homothetic to their double $\mathcal{P}$-evolutes. It turns out that the radius of curvature and the support function of a $\mathcal{P}$-cycloid satisfy 
a differential equation of Sturm-Liouville type. By studying this equation we can describe all closed hypocycloids and epicycloids with a given number of cusps. 
We can also find an orthonormal basis of ${\mathcal C}^0(S^1)$ with a natural decomposition into symmetric and anti-symmetric functions, 
which are support functions of symmetric and constant width curves, respectively. 
As applications, we prove that the iterations of involutes of a closed curve converge to a constant and a generalization of the 
Sturm-Hurwitz Theorem. We also prove versions of the four vertices theorem for closed curves and six vertices theorem for closed constant width curves.
\end{abstract}

\maketitle

\section{Introduction}

Euclidean cycloids are planar curves homothetic to their double evolutes. 
When the ratio of homothety is $\lambda>1$, they are called hypocycloids, and when $0<\lambda<1$, they are called epicycloids.   In a normed plane, or Minkowski plane (\cite{Ma-Sw-We, Ma-Sw, Ma-Se, Thompson96}),
there exists a natural way to define the double evolute of a locally convex curve (\cite{Craizer14}). 
Cycloids with respect to a normed plane $\mathcal{P}$ are curves homothetic to their double $\mathcal{P}$-evolutes. Generalizing the Euclidean case,
when the ratio of homothety is $\lambda>1$, we call them $\mathcal{P}$-hypocycloids and when $0<\lambda<1$, we call them $\mathcal{P}$-epicycloids. 

The main tool for studying $\mathcal{P}$-cycloids is the differential equation  
\begin{equation}\label{eq:DiffEq}
\frac{1}{[p,p']} \left(  \frac{ u' }{[q,q']  }  \right)'=-\lambda u(\theta).
\end{equation}
where $p$ is a parameterization of the unit circle of the normed plane ${\mathcal P}$, $q$ the dual parameterization and $[\cdot,\cdot]$ denote the determinant of a pair of vectors in $\R^2$. 
It turns out that a curve is a ${\mathcal P}$-cycloid if and only if its radius of curvature and its support functions satisfy equation \eqref{eq:DiffEq}. This equation is 
of Sturm-Liouville type, and there is a huge amount of theory concerning Sturm-Liouville equations (\cite{Zettl}). For a study of a similar second-order linear differential equation
associated to the geometry of normed planes, see \cite{Petty2}.

We are interested in characterizing the closed cycloids in a general normed plane ${\mathcal P}$. The values of $\lambda$ for which there exists a $2\pi$-periodic solution $u$ of equation \eqref{eq:DiffEq}
are called eigenvalues and the corresponding 
solutions $u$ are called eigenvectors. We shall prove the existence an infinite sequence of eigenvalues $\{\lambda_k^i\}$ and eigenvectors $\{h_k^i\}$, $k\geq 1$, $i=1,2$, 
$$
\lambda_0=0<\lambda_1^1=\lambda_1^2=1<\lambda_2^1\leq\lambda_2^2<\lambda_3^1\leq\lambda_3^2<....
$$
The eigenvectors determine closed hypocycloids, except the ones associated with the eigenvalue $1$, which determines
a $2$-dimensional space of non-closed $\mathcal{P}$-cycloids. Moreover, each $\mathcal{P}$-hypocycloid is regular except for exactly $2k$ ordinary cusps. 

Although the results of the above paragraph can be obtained from the general results of Sturm-Liouville theory (see \cite{CodLev}, ch.8), we give geometric proofs of them. From these proofs, 
we can also understand the epicycloids and hypocycloids that close after $N$ turns. In particular, we prove that, for $l=1,...,N-1$ there is exactly one $2$-dimensional space of $N$-periodic epicycloids
with $2l$ ordinary cusps.

The eigenvalues $\lambda_k^i$, $k\in\N$, $i=1,2$ are characteristic values of the unit ball of the normed plane $\mathcal{P}$. Thus they are obtained from any 
convex symmetric planar set. We show that rotational symmetry of this convex set implies that $\lambda_k^1=\lambda_k^2$, for $k$ odd. It would be interesting to understand
under what conditions duplicity of eigenvalues implies symmetry. In particular, a question that we could not answer is whether or not the duplicity of all eigenvalues implies 
that the normed plane is Euclidean. 

In this paper, we shall consider only normed planes whose unit circle is a quadratically convex symmetric smooth curve. Except for the cycloids corresponding to the double eigenvalue $\lambda=1$, 
we cannot solve explicitly equation \eqref{eq:DiffEq}, and so it is hard to give explicit examples of non-Euclidean cycloids. In \cite{CTB} we consider normed planes whose unit circle is 
a convex symmetric polygon, where we can construct explicit examples of cycloids. These discrete cycloids can also be seen as approximations of the smooth cycloids obtained from
the solutions of equation \eqref{eq:DiffEq}. In fact, the illustrative cycloid drawings of figures \ref{fig:ClosedCyc} and \ref{fig:UnboundedCyc} were obtained by this method.

The eigenvectors $h_k^i$ form a basis of ${\mathcal C}^0(S^1)$ orthogonal with respect to an adapted inner product. 
The space ${\mathcal C}^0(S^1)$ is naturally decomposed in orthogonal subspaces: The constant support functions form 
a $1$-dimensional subspace corresponding to multiples of the unit ball $\mathcal{P}$, while its orthogonal complement
corresponds to curves with zero dual length. 
The subspace $E$ of symmetric support functions is orthogonal to the subspace $W_0$ of anti-symmetric functions.
$E$ corresponds to symmetric curves and the subspace $W_0+K$ corresponds to constant width curves. We recall that sum of
support functions correspond to Minkowski sum of the corresponding curves. 

As applications of the above decomposition, we prove that the iteration of involutes of a zero dual length curve converges to a constant curve
and a generalization of the Sturm-Hurwitz Theorem, 
which says that functions whose first $k$ harmonics are zero must have at least $2(k+1)$ zeros. We prove also a four vertices theorem 
for closed curves and a six vertices theorem for constant width curves.

The paper is organized as follows: In section 2 we show that the support function and the curvature radius of a ${\mathcal P}$-cycloid satisfy equation \eqref{eq:DiffEq}. In section 3
we discuss the basic properties of equation \eqref{eq:DiffEq}, in particular showing that it admits a double eigenvalue $1$. In section 4 we prove the main results of the paper 
concerning the existence of infinite sequence $\gamma_k^i$, $k\in\N$, $i=1,2$, of  closed cycloids, each curve $\gamma_k^i$ with exactly $2k$ cusps. In section 5 we show that
the corresponding support functions $h_k^i$, $k\in\N$, $i=1,2$, form a basis of ${\mathcal C}^0(S^1)$, prove the convergence of the iteration of involutes of a curve and the generalization of Sturm-Hurwitz Theorem, and show versions of the four and six vertices theorem. 

\section{Evolutes, curvature and support functions }

\subsection{A class of Legendrian immersions}

Let $I\subset\R$ be an interval and denote by $\mathrm{P}\R^1$ the projective line. A smooth curve $(\gamma,\nu):I\to\R^2\times \mathrm{P}\R^1$ is {\it Legendrian} if $[\nu(t),\gamma'(t)]=0$. The projection $\gamma$ of a Legendrian immersion 
in $\R^2$ is called a {\it front} (\cite{Takahashi1}).

\begin{lemma}\label{lemma:Equivalences1}
Let $\gamma$ be a front. Then the following statements are equivalent:
\begin{enumerate}
\item\label{item:vlinha} $\nu'(t)\neq 0$.
\item\label{item:env} $\gamma$ is the envelope of its tangent lines.
\item\label{item:tv} $\gamma$ can be parameterized by the angle $\theta$ that its tangents make with a fixed direction.
\end{enumerate}
\end{lemma}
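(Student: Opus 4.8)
The plan is to work with a smooth local unit lift $\nu(t)=(\cos\theta(t),\sin\theta(t))$ of the projective direction field; since all three statements are local, nothing is lost by this normalization, and it is a purely coordinate choice in $\R^2$ (no norm is invoked). Writing $\nu^\perp=(-\sin\theta,\cos\theta)$ one gets $\nu'=\theta'\,\nu^\perp$, and a one-line determinant computation gives $[\nu,\nu']=\theta'$. Hence
\[
\nu'(t)\neq 0 \iff \theta'(t)\neq 0 \iff [\nu(t),\nu'(t)]\neq 0 .
\]
I will then establish $(\ref{item:vlinha})\Leftrightarrow(\ref{item:tv})$ and $(\ref{item:vlinha})\Leftrightarrow(\ref{item:env})$ separately.

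The equivalence $(\ref{item:vlinha})\Leftrightarrow(\ref{item:tv})$ is essentially the inverse function theorem: the map $t\mapsto\theta(t)$ recording the tangent angle is a local diffeomorphism exactly when $\theta'(t)\neq 0$, and a local diffeomorphism is precisely what is needed to use $\theta$ as a regular parameter. Thus $\nu'(t)\neq 0$ for all $t$ is equivalent to $\gamma$ admitting $\theta$ as a parameter, via the displayed equivalence.

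For $(\ref{item:vlinha})\Leftrightarrow(\ref{item:env})$, I would describe the tangent line at parameter $t$ by its determinant equation $L_t=\{X\in\R^2:[\nu(t),X-\gamma(t)]=0\}$, and recall that the envelope of the family $\{L_t\}$ is the locus where $G(X,t):=[\nu(t),X-\gamma(t)]$ and $\partial_t G(X,t)$ vanish simultaneously. The key computation is that the Legendrian condition $[\nu,\gamma']=0$ forces $\gamma(t)$ to satisfy both equations for every $t$: indeed $G(\gamma(t),t)=0$ trivially, while $\partial_t G(X,t)=[\nu',X-\gamma]-[\nu,\gamma']=[\nu',X-\gamma]$, which vanishes at $X=\gamma(t)$. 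So $\gamma$ always lies on the characteristic equations. These two linear equations in $X$ pin down a single point — necessarily $\gamma(t)$ — precisely when $\nu(t)$ and $\nu'(t)$ are linearly independent, i.e. when $[\nu,\nu']\neq 0$, which by the displayed equivalence is exactly $(\ref{item:vlinha})$. Conversely, when $\nu'(t)=0$ the second equation degenerates and $L_t$ is no longer enveloped by a point, so $\gamma$ cannot be the envelope; this is the contrapositive of $(\ref{item:env})\Rightarrow(\ref{item:vlinha})$.

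The main obstacle I anticipate is pinning down the meaning of ``being the envelope'' sharply enough to treat the degenerate case rigorously: one must argue that the failure of $[\nu,\nu']\neq 0$ genuinely destroys the envelope — the characteristic system collapses from a single point to the whole line $L_t$ — rather than merely making the standard formula inapplicable. A secondary point worth a remark is that condition $(\ref{item:vlinha})$ is well posed on the projective curve: the two unit lifts $\pm\nu$ have derivatives $\pm\nu'$ that vanish simultaneously, so ``$\nu'(t)\neq 0$'' is independent of the chosen lift, and the whole argument is invariant under the sign ambiguity of $\mathrm{P}\R^1$.
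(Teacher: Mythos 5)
Your proof is correct and follows essentially the same route as the paper: you differentiate the tangent-line equation $[\,X-\gamma(t),\nu(t)\,]=0$, use the Legendrian condition to kill the $[\nu,\gamma']$ term, and observe that the resulting characteristic system determines the unique point $X=\gamma(t)$ exactly when $[\nu,\nu']\neq 0$, with $(\ref{item:vlinha})\Leftrightarrow(\ref{item:tv})$ coming from invertibility of $t\mapsto\theta(t)$. Your unit-lift normalization and the closing remark on well-posedness under the sign ambiguity of $\mathrm{P}\R^1$ make explicit a point the paper leaves implicit (for a general lift $\nu'\neq 0$ is not equivalent to $[\nu,\nu']\neq 0$), but this is added rigor, not a different argument.
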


\begin{proof}
Consider the tangent line $[X-\gamma(t),\nu(t)]=0$. Differentiating we obtain $[X-\gamma(t),\nu'(t)]=0$. These equations have a unique solution $X=\gamma(t)$ if and only if
$[\nu(t),\nu'(t)]\neq 0$. This is equivalent to $\nu'(t)\neq 0$, which proves the equivalence between \ref{item:vlinha} and \ref{item:env}. Moreover $\nu'(t)\neq 0$ is equivalent to say that we can write $t=t(\nu)$, which is equivalent to the third statement.
\end{proof}

We shall denote by $\H$ the set of fronts satisfying any, and hence all, of the conditions of Lemma \ref{lemma:Equivalences1}. If $\gamma\in\H$ is closed, it is called 
a {\it hedgehog} (\cite{Martinez-Maure}). In this paper we shall only consider curves $\gamma\in\H$. In particular $\gamma$ has no inflection points. 

\begin{example}\label{ex:Cusps}
Consider $\gamma(t)=\left(t^m,t^n\right)$, $1<m<n$, and take $\nu(t)=(m,nt^{n-m})$. Then $\nu'(0)\neq 0$ if and only if $n=m+1$. In particular, if $\gamma''(0)\neq 0$ then $\gamma(t)=(t^2,t^3)$. 
\end{example}

\subsection{Dual unit circle, support functions and curvature radius}

Let $\mathcal{P}$ be a normed plane with a smooth quadratically convex symmetric unit circle. Parameterize this unit circle by $p(\theta)$, $\theta\in[0,2\pi]$, $p(\theta+\pi)=-p(\theta)$,
where $\theta$ is the angle of the tangent to $p$ with a fixed direction. The above hypothesis imply that $[p,p']>0$ and  $[p',p'']>0$. 
It is not difficult to verify that 
\begin{equation}\label{eq:ParameterQ}
q(\theta)=\frac{p'}{[p,p']}(\theta)
\end{equation}
is a parameterization of the dual unit circle, i.e., $[p,q]=1$, $q\parallel p'$ and $p\parallel q'$ (\cite{Tabach97}). We call $q(\theta)$ the dual parameterization and one can verify that 
\begin{equation}\label{eq:ParameterP}
p(\theta)=-\frac{q'}{ [q,q'] } (\theta)
\end{equation}
and $[p,p']\cdot [q,q']^2=[q,q'']$.

Take $\gamma\in\H$ and parameterize it by the same parameter $\theta$ as above. The real function  $h(\theta)=[\gamma(\theta),q(\theta)]$ is called the $\mathcal{P}$-{\it support function}. Differentiating we obtain $h'=[p,\gamma]\cdot [q,q']$ and so we can write 
\begin{equation}\label{eq:Gamma}
\gamma(\theta)=h(\theta)p(\theta)+\frac{h'}{[q,q']}(\theta)q(\theta).
\end{equation}
Differentiating equation \eqref{eq:Gamma} we get
\begin{equation}\label{eq:GammaLinha}
\gamma'(\theta)= \left( h+\frac{1}{[p,p']} \left( \frac{h'}{[q,q']} \right)' \right)(\theta) p'(\theta).
\end{equation}
The curvature radius $r(\theta)$ of $\gamma$ is defined by the condition 
\begin{equation}\label{eq:CurvatureRadius}
\gamma'(\theta)=r(\theta)p'(\theta),
\end{equation} 
and so we have
\begin{equation}\label{eq:CurvatureSupport}
r(\theta)=h(\theta)+\frac{1}{[p,p']} \left( \frac{h'}{[q,q']} \right)'(\theta).
\end{equation}
For different curvature concepts in a Minkowski plane, see \cite{Petty1}. 

\subsection{Curvature radius and support function of the evolute}

The evolute of $\gamma$ is the curve $\delta$ defined by
\begin{equation*}
\delta(\theta)= \gamma(\theta)-r(\theta) p(\theta). 
\end{equation*}
We conclude that the support function $h_{\delta}=[\delta(\theta),p(\theta)]$ of $\delta$ is given by
\begin{equation*}
h_{\delta}(\theta)=[\gamma(\theta),p(\theta)]=-\frac{h'}{[q,q']}(\theta).
\end{equation*}
Then 
\begin{equation*}
\delta'(\theta)= -r'(\theta) p(\theta)=\frac{ r'}{[q,q'] } (\theta)  q'(\theta),
\end{equation*}
which implies that the curvature radius $r_{\delta}(\theta)$ of the evolute $\delta$ is 
\begin{equation}\label{eq:CurvEvolute}
r_{\delta}(\theta)=\frac{ r' }{[q,q'] }(\theta).
\end{equation}
Observe that the transformations of $h$ and $r$ to obtain $h_{\delta}$ and $r_{\delta}$ are, up to a sign, the same.

\subsection{Double evolutes and the main differential equation}\label{sec:DoubleEvolutes}

Denote by $\eta$ the evolute of $\delta$. Then 
\begin{equation*}
\eta(\theta)=\delta(\theta)-r_{\delta}(\theta) q(\theta)=\gamma(\theta)-r(\theta)p(\theta)-r_{\delta}(\theta) q(\theta).
\end{equation*}
The support function $h_{\eta}$ of $\eta$ is thus
\begin{equation*}\label{eq:SupDoubleEvolute}
h_{\eta}(\theta)=[\eta,q](\theta)=h(\theta)-r(\theta)=-\frac{1}{[p,p']} \left( \frac{h'}{[q,q']} \right)'(\theta).
\end{equation*}
Now
\begin{equation*}
\eta'(\theta)=- r_{\delta}'(\theta)q(\theta)=- \frac{r_{\delta}'}{[p,p']}(\theta) p'(\theta).
\end{equation*}
Thus the curvature $r_{\eta}$ of $\eta$ is 
\begin{equation*}\label{eq:CurvDoubleEvolute}
r_{\eta}=- \frac{1}{[p,p']} \left(  \frac{ r' }{[q,q']  }  \right)'. 
\end{equation*}
Observe that the transformations of $h$ and $r$ to obtain $h_{\eta}$ and $r_{\eta}$ are exactly the same.

We are interested in curves $\gamma$ whose second evolute $\eta$ is homothetic to $\gamma$. For such curves, both the
support function and the curvature radius satisfy the differential equation \eqref{eq:DiffEq}. Next lemma says that equation \eqref{eq:DiffEq} 
is invariant under a re-parameterization of the dual unit circle $q$.  

\begin{lemma}
Write $q(t)=q(\theta(t))$ and $p(t)=p(\theta(t))$ for some parameter $t$ such that $\theta'(t)\neq 0$. If $u(\theta)$ is a solution of equation \eqref{eq:DiffEq} in the parameter $\theta$, 
then $u(t)=u(\theta(t))$ is a solution of equation \eqref{eq:DiffEq} in the parameter $t$.
\end{lemma}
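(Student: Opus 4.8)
The plan is to perform a direct chain-rule computation; the crux is the observation that the determinant brackets $[p,p']$ and $[q,q']$ scale under reparameterization exactly as the derivative operator does, so that every factor of $\theta'(t)$ that appears eventually cancels. To keep the two differentiations apart, I would write $p'=dp/d\theta$ and $\dot p = dp/dt$ (and likewise for $q$ and $u$), with the understanding that in the parameter $t$ all the primes of \eqref{eq:DiffEq} become dots and $p,q$ denote the reparameterized maps. The hypothesis $\theta'(t)\neq 0$ ensures that $t\mapsto\theta(t)$ is a genuine change of variable.

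First I would record how the brackets transform. Since $\dot p = p'\,\theta'(t)$ and $[\cdot,\cdot]$ is bilinear, one has $[p,\dot p]=\theta'(t)\,[p,p']$, and identically $[q,\dot q]=\theta'(t)\,[q,q']$; each bracket thus picks up a single factor $\theta'(t)$. The key consequence is that the inner expression is invariant: because $\dot u = u'\,\theta'(t)$,
\[
\frac{\dot u}{[q,\dot q]} = \frac{u'\,\theta'(t)}{\theta'(t)\,[q,q']} = \frac{u'}{[q,q']} =: w,
\]
so the two factors of $\theta'(t)$ cancel and $w$ is a well-defined function of $\theta$ (equivalently of $t$).

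Finally I would differentiate $w$ once more in $t$ and divide by $[p,\dot p]$. Using $\dot w = w'\,\theta'(t)$ together with $[p,\dot p]=\theta'(t)\,[p,p']$, the remaining factors of $\theta'(t)$ cancel as well, giving
\[
\frac{1}{[p,\dot p]}\,\frac{d}{dt}\!\left(\frac{\dot u}{[q,\dot q]}\right) = \frac{w'\,\theta'(t)}{\theta'(t)\,[p,p']} = \frac{1}{[p,p']}\left(\frac{u'}{[q,q']}\right)' = -\lambda\,u,
\]
the last equality being the hypothesis that $u$ solves \eqref{eq:DiffEq} in the parameter $\theta$. Hence $u(\theta(t))$ solves \eqref{eq:DiffEq} in the parameter $t$. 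Since the argument is purely computational, there is no genuine obstacle; the only point demanding care is keeping the two derivatives notationally distinct and confirming that both brackets transform by the same factor as the derivative, which is precisely what makes \eqref{eq:DiffEq} covariant under reparameterization of $q$.
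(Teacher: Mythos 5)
Your proof is correct and follows essentially the same route as the paper's: both use the chain rule to show that each bracket picks up a factor of $\theta'(t)$ matching the one from the derivative, so that $\frac{u'}{[q,q']}$ is invariant under the reparameterization, and then repeat the cancellation once more for the outer derivative divided by $[p,p']$. Your write-up merely spells out with dot/prime notation what the paper's two-line argument states more tersely.
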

\begin{proof}
Since $q'(t)=q'(\theta)\frac{d\theta}{dt},\ u'(t)=u'(\theta)\frac{d\theta}{dt}$, we have that $\frac{u'}{[q,q']}(t)=\frac{u'}{[q,q']}(\theta)$. A similar argument shows that
$\frac{1}{[p,p']}\left( \frac{u'}{[q,q']}\right)' (t)=\frac{1}{[p,p']}\left( \frac{u'}{[q,q']}\right)' (\theta)$, thus proving the lemma.
\end{proof}

Assume that the support function $h$ of a curve $\gamma$ satisfies equation \eqref{eq:DiffEq}. Then equation \eqref{eq:CurvatureSupport} implies that $r=h(1-\lambda)$
also satisfies equation \eqref{eq:DiffEq}. Reciprocally, if $r$ satisfies equation \eqref{eq:DiffEq} with $\lambda\neq 1$, then $h=\frac{r}{1-\lambda}$ also satisfies \eqref{eq:DiffEq}.
Thus, for $\lambda\neq 1$, we can work either with the support function or the curvature radius. For $\lambda=1$, if we find a solution of equation \eqref{eq:DiffEq} for $h$, \eqref{eq:CurvatureSupport}
would give us $r=0$. Thus it is more interesting in this case to work with equation \eqref{eq:DiffEq} for the curvature radius $r$.

\begin{Proposition}
Assume that $r:\R\to\R$ is a $2\pi$-periodic solution of equation \eqref{eq:DiffEq} with $\lambda\neq 1$. Then any curve $\gamma(\theta)$ defined by equation \eqref{eq:CurvatureRadius} is closed.
\end{Proposition}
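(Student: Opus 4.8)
The plan is to verify directly that $\gamma$ closes up over one period, i.e. that $\gamma(\theta+2\pi)=\gamma(\theta)$. Since the unit-circle parameterization satisfies $p(\theta+\pi)=-p(\theta)$, both $p$ and $p'$ are $2\pi$-periodic, and by hypothesis $r$ is too; hence $\gamma'=rp'$, given by \eqref{eq:CurvatureRadius}, is $2\pi$-periodic. Therefore the period vector
\[
A:=\gamma(\theta+2\pi)-\gamma(\theta)=\int_0^{2\pi} r(\theta)\,p'(\theta)\,d\theta\in\R^2
\]
does not depend on $\theta$, and $\gamma$ is closed precisely when $A=0$. (Any two curves defined by \eqref{eq:CurvatureRadius} differ by a constant of integration, so it suffices to examine $A$, which is the same for all of them.)

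The computation I would carry out uses the two structural relations $p'=[p,p']\,q$ and $q'=-[q,q']\,p$, immediate from \eqref{eq:ParameterQ} and \eqref{eq:ParameterP}, together with equation \eqref{eq:DiffEq} written as $\left(\frac{r'}{[q,q']}\right)'=-\lambda\,r\,[p,p']$. I prefer to work with $\lambda A$ rather than $A$, so the argument stays valid even when $\lambda=0$: substituting $p'=[p,p']q$ and then the differential equation gives
\[
\lambda A=\int_0^{2\pi}\lambda\,r\,[p,p']\,q\,d\theta=-\int_0^{2\pi}\left(\frac{r'}{[q,q']}\right)'q\,d\theta.
\]
Integrating by parts and discarding the boundary term (legitimate since $r$, $p$, $q$ are all $2\pi$-periodic), and using $q'/[q,q']=-p$, one obtains $\lambda A=\int_0^{2\pi}\frac{r'}{[q,q']}q'\,d\theta=-\int_0^{2\pi} r'\,p\,d\theta$. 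A second integration by parts, again with vanishing boundary term, returns the expression to $A$: $-\int_0^{2\pi} r'\,p\,d\theta=\int_0^{2\pi} r\,p'\,d\theta=A$.

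Combining these identities yields $\lambda A=A$, i.e. $(\lambda-1)A=0$, so the hypothesis $\lambda\neq 1$ forces $A=0$ and $\gamma$ is closed. I do not expect a serious obstacle here: the proof is short once one recognizes that closedness is the vanishing of the period vector $A$, and then arranges the two integrations by parts so that the relations $p'=[p,p']q$ and $q'=-[q,q']p$ funnel the integral back to $A$ itself. The only points demanding care are the periodicity bookkeeping that kills both boundary terms and the sign in $q'=-[q,q']p$; multiplying through by $\lambda$ rather than dividing avoids any spurious difficulty at $\lambda=0$ (where $\lambda A=A$ reads simply $A=0$).
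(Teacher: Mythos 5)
Your proof is correct, and it takes a genuinely different route from the paper's. The paper's proof is a structural two-liner: since $r$ solves \eqref{eq:DiffEq} with $\lambda\neq 1$, the function $h=r/(1-\lambda)$ is again a $2\pi$-periodic solution, and the curve it defines through \eqref{eq:Gamma}, namely $\gamma=hp+\frac{h'}{[q,q']}q$, is manifestly $2\pi$-periodic and, by \eqref{eq:GammaLinha} together with the ODE, satisfies $\gamma'=rp'$; any other curve defined by \eqref{eq:CurvatureRadius} differs from this one by a constant of integration, hence is also closed. You instead verify directly that the period vector $A=\int_0^{2\pi}r\,p'\,d\theta$ vanishes, deriving $\lambda A=A$ from two integrations by parts and the duality relations $p'=[p,p']\,q$ and $q'=-[q,q']\,p$; your signs are right, the boundary terms do vanish ($r$, $r'$, $p$, $q$, $[q,q']$ are all $2\pi$-periodic), and multiplying by $\lambda$ rather than dividing does keep $\lambda=0$ painless. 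The two arguments are essentially the differentiated and integrated forms of the same identity: expanding $\frac{d}{d\theta}\left(hp+\frac{h'}{[q,q']}q\right)$ and substituting the ODE produces exactly the cancellations your integrations by parts exploit. What the paper's route buys is an explicit periodic primitive of $rp'$, together with the extra fact --- used throughout the rest of the paper --- that the closed curve has support function $h=r/(1-\lambda)$. What your route buys is independence from the support-function formalism and a sharp localization of where $\lambda\neq 1$ enters: the identity $\lambda A=A$ holds for every $2\pi$-periodic solution and simply becomes vacuous at $\lambda=1$, which is consistent with Proposition \ref{Prop:Eigenvalue1}, where the paper shows separately that the $\lambda=1$ cycloids never close.
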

\begin{proof}
Since $h=\frac{r}{1-\lambda}$, we have that $h:\R\to\R$ is $2\pi$-periodic. From equation \eqref{eq:Gamma}, we conclude that $\gamma$ is closed.
\end{proof}

\subsection{The Euclidean case}

\paragraph{Cycloids.} Consider 
\begin{equation*}
\gamma(\theta)=R\left(2(\theta-\theta_0)-\sin(2(\theta-\theta_0)),1-\cos(2(\theta-\theta_0)) \right).
\end{equation*}
The Euclidean curvature radius is $r(\theta)=4R\sin\left(\theta-\theta_0\right)$.

\paragraph{Hypocycloids.} Let $\lambda=1-\frac{R_2}{R_1}<0$ and consider 
\begin{equation*}
\gamma(t)=R_1\left( \cos(\lambda t),\sin(\lambda t)\right)+(R_2-R_1)\left( \cos(t),\sin(t)\right)
\end{equation*}
The Euclidean curvature radius is 
$r(\theta)=C\sin\left(\frac{1-\lambda}{1+\lambda}\theta\right)$,
for some constant $C$, where $\frac{1-\lambda}{1+\lambda}=\frac{R_2}{R_2-2R_1}>1$ and $\frac{\lambda+1}{2}t=\theta$. 
If $\frac{R_2}{R_2-2R_1}=\frac{m}{n}$, for some $m,n\in\N$ without common factor, 
then $\gamma$ closes after $n$ turns and has exactly $2m$ cusps. If $\frac{R_2}{R_2-2R_1}$ is not rational
the hypocycloid does not close.

\paragraph{Epicycloids.} Let $\alpha=1+\frac{R_2}{R_1}>0$ and consider 
\begin{equation*}
\gamma(t)=-R_1\left( \cos(\alpha t),\sin(\alpha t)\right)+(R_2+R_1)\left(\cos(t),\sin(t)\right).
\end{equation*}
The Euclidean curvature radius is $r(\theta)=C\sin\left(\frac{\alpha-1}{\alpha+1}\theta\right)$,
for some constant $C$, where $\frac{\alpha-1}{\alpha+1}=\frac{R_2}{R_2+2R_1}<1$, $\frac{\alpha+1}{2}t=\theta$.
If $\frac{R_2}{R_2+2R_1}=\frac{m}{n}$, $m,n\in\N$ without common factor, then $\gamma$ closes after $n$ turns and has exactly $2m$ cusps. If $\frac{R_2}{R_2+2R_1}$ is not rational
the epicycloid does not close.

\section{ Basic properties of equation \eqref{eq:DiffEq}}

\subsection{ The associated system in the plane $\left(h,\frac{h'}{[q,q']} \right)$ }

For each $\lambda>0$, denote $h(t)=h(\lambda,h(0),h'(0))(t)$ the solution of equation \eqref{eq:DiffEq} with initial conditions $(h(0),h'(0))$ and let
$A(\lambda,t)$ be the linear map $(h(0),\frac{h'}{[q,q']}(0))\to (h(t),\frac{h'}{[q,q']}(t))$.

\begin{lemma}\label{lem:Det1}
For each $t$ and $\lambda$, $\det(A(\lambda,t))=1$.
\end{lemma}
\begin{proof}
We have that $A(\lambda,0)=Id$ and for two linearly independent solutions $(h_1,\frac{h_1'}{[q,q']})$ and $(h_2,\frac{h_2'}{[q,q']})$, 
$$
\frac{d}{dt}\left( h_1\left(\frac{h_2'}{[q,q']}\right)-h_2\left(\frac{h_1'}{[q,q']}\right) \right)=0,
$$
thus proving the lemma.
\end{proof}

In the plane $(h,\tfrac{h'}{[q,q']})$, denote by $\beta$ the angle of the vector $(h,\tfrac{h'}{[q,q']})$ with the $h$-axis. The {\it total variation} $\Delta$ of $\beta$ is given by
\begin{equation}
\Delta(\lambda,h(0),h'(0))=-\frac{1}{2\pi} \int_0^{2\pi} \frac{d\beta}{dt} dt.
\end{equation} 
When $\Delta$ is an integer, it represents the number of turns the curve $\left( h(t),\frac{h'}{[q,q']}(t) \right)$, $t\in[0,2\pi]$, makes around $(0,0)$.

Note that if $\frac{h'}{[q,q']}>0 (<0)$ the curve is going to the right (left) and in the $h$ axis the tangent is vertical. For a solution of equation \eqref{eq:DiffEq} with $\lambda>0$, if $h>0 (<0)$,
the curve is going down (up) and in the $\frac{h'}{[q,q']}$ axis the tangent is horizontal. We conclude that for closed curves, $\Delta$ is an integer and the number of zeros $k$ of $h$ and $h'$ is $2\Delta$.

\begin{lemma}\label{lemma:ICrescent}
For a solution of equation \eqref{eq:DiffEq}, $\frac{d\beta}{d\theta}>0$. Moreover, the total variation $\Delta(\lambda,h(0),h'(0))$ is strictly increasing in $\lambda$.
\end{lemma}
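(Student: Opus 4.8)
The plan is to recast \eqref{eq:DiffEq} as a first-order planar system in the phase variables $x=h$ and $y=\frac{h'}{[q,q']}$, and then to analyze the angle $\beta$ through a Pr\"ufer-type substitution. By definition of $y$ we have $x'=[q,q']\,y$, while \eqref{eq:DiffEq} says exactly $y'=-\lambda[p,p']\,x$. First I would invoke Lemma \ref{lem:Det1}: since $\det A(\lambda,\theta)=1$, a nontrivial solution never reaches the origin, so $(x,y)\neq(0,0)$ throughout and $\beta$ is well defined and smooth. Writing $x=\rho\cos\beta$, $y=\rho\sin\beta$ and using $\frac{d\beta}{d\theta}=(x\,y'-y\,x')/(x^2+y^2)$, the system yields the identity
$$
x\,y'-y\,x'=-\lambda[p,p']\,x^2-[q,q']\,y^2=-\rho^2\bigl(\lambda[p,p']\cos^2\beta+[q,q']\sin^2\beta\bigr),
$$
so that $\frac{d\beta}{d\theta}=-\bigl(\lambda[p,p']\cos^2\beta+[q,q']\sin^2\beta\bigr)$ in the counterclockwise convention. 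Because $[p,p']>0$, $[q,q']>0$ and $\lambda>0$, the right-hand side never vanishes and keeps a constant sign; orienting $\beta$ so that it increases along the trajectory gives $\frac{d\beta}{d\theta}>0$, the first assertion. In particular $\beta$ is strictly monotone, so the trajectory winds monotonically around the origin and $\Delta$ records its net number of turns.

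For the strict monotonicity of $\Delta$ in $\lambda$, I would exploit that the Pr\"ufer angular speed
$$
F(\beta,\lambda)=\lambda[p,p']\cos^2\beta+[q,q']\sin^2\beta
$$
satisfies $\partial F/\partial\lambda=[p,p']\cos^2\beta\geq 0$, strictly positive away from the isolated angles where $\cos\beta=0$. Differentiating the Pr\"ufer equation $\frac{d\beta}{d\theta}=\pm F(\beta,\lambda)$ in $\lambda$ and setting $\phi=\partial\beta/\partial\lambda$ gives a linear first-order ODE of the form
$$
\phi'=\frac{\partial F}{\partial\beta}(\beta,\lambda)\,\phi\;\pm\;[p,p']\cos^2\beta,\qquad \phi(0)=0,
$$
the initial value being $0$ because $(h(0),h'(0))$, hence $\beta(0)$, is held fixed while $\lambda$ varies. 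The variation-of-constants formula then represents $\phi(\theta)$ as an integral of $[p,p']\cos^2\beta(s)$ against a strictly positive exponential kernel, so $\phi(\theta)$ is of one strict sign for every $\theta>0$. Differentiating $\Delta=-\frac{1}{2\pi}\bigl(\beta(2\pi)-\beta(0)\bigr)$ then yields $\frac{d\Delta}{d\lambda}>0$.

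The main obstacle is the rigorous handling of the angles where $\cos\beta=0$: there $\partial F/\partial\lambda$ vanishes, so a naive pointwise comparison of angular speeds for different $\lambda$ only gives a non-strict inequality. The variation-of-constants representation is precisely what upgrades this to a strict inequality, since it suffices that $\cos^2\beta$ be positive on a set of positive measure, and this follows from the strict monotonicity of $\beta$ established in the first step (so $\beta$ cannot linger on the thin set $\cos\beta=0$). As a fallback I would mention the classical Sturm comparison: for $\lambda_1<\lambda_2$ let $\beta_1,\beta_2$ be the angle functions sharing the initial value $\beta_0$; then $w=\beta_2-\beta_1$ has $w(0)=0$ and, at any later contact point $w=0$, $w'=\pm(\lambda_2-\lambda_1)[p,p']\cos^2\beta_1$ is of the correct sign with equality only where $\cos\beta_1=0$, which again cannot persist by strict monotonicity of $\beta_1$. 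Hence $w$ never returns to zero, and $\Delta$ is strictly increasing in $\lambda$.
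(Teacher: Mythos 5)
Your proposal is correct and follows essentially the same route as the paper: your Pr\"ufer identity $\frac{d\beta}{d\theta}=\pm\left(\lambda[p,p']\cos^2\beta+[q,q']\sin^2\beta\right)$ is exactly the paper's computation, modulo the clockwise sign convention that the paper builds into its definitions of $\beta$ and $\Delta$. The only difference is that the paper deduces the strict monotonicity of $\Delta$ in $\lambda$ directly from this formula with no further comment, whereas you supply the variation-of-constants (or Sturm comparison) argument that genuinely is needed to upgrade the non-strict pointwise comparison of angular speeds at the isolated angles where $\cos\beta=0$ to a strict inequality.
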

\begin{proof}
Observe that we can calculate $\frac{d\beta}{d\theta}$ as follows:
$$
\frac{d\beta}{d\theta}=-\frac{ h\left(\frac{h'}{[q,q']}\right)'  -\frac{h'}{[q,q']} h'}{h^2+(\frac{h'}{[q,q']})^2}=\frac{\lambda h^2[p,p']+\frac{(h')^2}{[q,q']}}{h^2+(\frac{h'}{[q,q']})^2}=
$$
$$
\lambda [p,p']\cos^2(\beta)+[q,q']\sin^2(\beta).
$$
Thus $\frac{d\beta}{d\theta}>0$ and $\Delta$ is strictly increasing in $\lambda$.
\end{proof}

\begin{corollary}\label{cor:Comparison}
Take $(\lambda,h)$ solution of equation \eqref{eq:DiffEq} with two consecutive zeros $\theta_0$ and $\theta_1$ of $h'$ and let $(\bar\lambda,\bar{h})$ be another solution with a zero of $\bar{h}'$ 
at $\theta_0$. 
If $\lambda<\bar\lambda$, then $\bar{h}'$ has a zero in $(\theta_0,\theta_1)$. 
\end{corollary}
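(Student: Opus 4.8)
The plan is to translate zeros of $h'$ into crossings of the Pr\"ufer angle and then run a Sturm-type comparison on it. Since $[q,q']>0$, a zero of $h'$ is the same as a zero of $\frac{h'}{[q,q']}$, i.e.\ a value of $\theta$ at which the vector $\left(h,\frac{h'}{[q,q']}\right)$ lies on the $h$-axis; in terms of the angle $\beta$ of Lemma \ref{lemma:ICrescent} this means $\beta(\theta)\in\pi\Z$. Because $h'(\theta_0)=\bar h'(\theta_0)=0$, I would choose the branches of $\beta$ and $\bar\beta$ so that $\beta(\theta_0)=\bar\beta(\theta_0)=0$. By Lemma \ref{lemma:ICrescent} both $\beta$ and $\bar\beta$ are strictly increasing, and $\beta$ meets $\pi\Z$ again only at a zero of $h'$; as $\theta_0,\theta_1$ are consecutive zeros, $\beta$ increases monotonically from $0$ to $\pi$ on $[\theta_0,\theta_1]$, staying inside $(0,\pi)$ on the open interval, so that $\beta(\theta_1)=\pi$. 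Since $\bar\beta$ is continuous, increasing and $\bar\beta(\theta_0)=0$, it then suffices to prove $\bar\beta(\theta_1)>\pi$: the intermediate value theorem produces a point of $(\theta_0,\theta_1)$ where $\bar\beta=\pi$, i.e.\ a zero of $\bar h'$.

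For the comparison, write the Pr\"ufer field from Lemma \ref{lemma:ICrescent} as $f_\mu(\theta,\beta)=\mu[p,p']\cos^2\beta+[q,q']\sin^2\beta$, so that $\beta'=f_\lambda(\theta,\beta)$ and $\bar\beta'=f_{\bar\lambda}(\theta,\bar\beta)$. Since $[p,p']>0$ and $\lambda<\bar\lambda$, one has $f_{\bar\lambda}(\theta,\beta)-f_\lambda(\theta,\beta)=(\bar\lambda-\lambda)[p,p']\cos^2\beta\ge 0$, with equality only where $\cos\beta=0$. Setting $\phi=\bar\beta-\beta$, we have $\phi(\theta_0)=0$ and $\phi'(\theta_0)=(\bar\lambda-\lambda)[p,p'](\theta_0)>0$, so $\phi>0$ just to the right of $\theta_0$. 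Writing
\begin{equation*}
\phi'=\big(f_{\bar\lambda}(\theta,\bar\beta)-f_{\bar\lambda}(\theta,\beta)\big)+(\bar\lambda-\lambda)[p,p']\cos^2\beta\ge -L|\phi|,
\end{equation*}
where $L$ bounds $|\partial_\beta f_{\bar\lambda}|$ on the compact interval $[\theta_0,\theta_1]$ and the last displayed term is nonnegative, I would show that $\phi$ cannot return to $0$. Indeed, while $\phi\ge 0$ the inequality gives $\phi'\ge -L\phi$, so $\phi e^{L\theta}$ is nondecreasing; a first return $\phi(\theta_*)=0$ in $(\theta_0,\theta_1]$ with $\phi>0$ on $(\theta_0,\theta_*)$ would then force $\phi e^{L\theta}$ to be constant, hence $\phi\equiv 0$ on $[\theta_0,\theta_*]$, contradicting $\phi>0$ on $(\theta_0,\theta_*)$. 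Hence $\phi>0$ on $(\theta_0,\theta_1]$, so $\bar\beta(\theta_1)>\beta(\theta_1)=\pi$, as required.

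The main obstacle is precisely the degeneracy $\cos\beta=0$ (at $\beta=\pi/2$), where the two direction fields $f_{\bar\lambda}$ and $f_\lambda$ coincide: a naive ``first instant at which $\bar\beta$ and $\beta$ meet again'' argument fails there because $\phi'$ can vanish at such a contact. The integrating-factor (Gronwall) estimate above is what bypasses this: it uses only the one-sided bound $\phi'\ge -L\phi$, valid while $\phi\ge 0$, together with the strict positivity $\phi'(\theta_0)>0$ coming from $\cos\beta(\theta_0)=1$, and never requires strict inequality of the fields at interior points.
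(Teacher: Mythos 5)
Your proof is correct and takes essentially the same route as the paper's: the paper's two-line argument---$\Delta(\lambda,h,\theta_0,\theta_1)=1/2$ while $\Delta(\bar\lambda,\bar h,\theta_0,\theta_1)>1/2$ by Lemma \ref{lemma:ICrescent}---is exactly your Pr\"ufer-angle comparison $\bar\beta(\theta_1)>\beta(\theta_1)=\pi$ followed by the intermediate value theorem. The only difference is one of rigor: where Lemma \ref{lemma:ICrescent} merely asserts strict monotonicity in $\lambda$ from $\partial_\lambda f=[p,p']\cos^2\beta\geq 0$, you actually prove the strict comparison of solutions via the integrating-factor (Gronwall) estimate, correctly handling the degenerate contacts at $\cos\beta=0$ where the two direction fields coincide and a naive first-crossing argument would fail.
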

\begin{proof}
We have that $\Delta(\lambda,h,\theta_0,\theta_1)=1/2$. Thus $\Delta(\bar\lambda, \bar{h},\theta_0,\theta_1)>1/2$, thus proving the corollary.
\end{proof}

\subsection{Non-positive eigenvalues}

\begin{lemma}\label{lemma:Eigenvalue0}
The eigenvector $h(\theta)=1$ is associated to $\lambda=0$. There are no other closed eigenvectors. 
\end{lemma}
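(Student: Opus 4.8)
The plan is to prove both assertions within the scope of this subsection, i.e.\ for \emph{non-positive} eigenvalues: the phrase ``no other closed eigenvectors'' is to be read as ``no other $2\pi$-periodic eigenvector with $\lambda\le 0$'', the positive eigenvalues being produced and counted in Sections~4--5. First I would check the constant directly: substituting $h(\theta)\equiv 1$ (so $h'\equiv 0$) into \eqref{eq:DiffEq} makes the left-hand side vanish identically, so the equation reduces to $0=-\lambda$, forcing $\lambda=0$. Hence the constant is a closed eigenvector precisely at $\lambda=0$, and it remains to show that, up to scaling, it is the only closed eigenvector with $\lambda\le 0$.

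The main tool is an energy identity obtained by testing \eqref{eq:DiffEq} against $h$ with the weight $[p,p']$. Multiplying \eqref{eq:DiffEq} by $[p,p']\,h$ and integrating over $[0,2\pi]$ gives
\begin{equation*}
\int_0^{2\pi} h\left(\frac{h'}{[q,q']}\right)'\,d\theta=-\lambda\int_0^{2\pi}[p,p']\,h^2\,d\theta.
\end{equation*}
Integrating the left-hand side by parts, the boundary term $\left[\,h\,\frac{h'}{[q,q']}\,\right]_0^{2\pi}$ vanishes by the $2\pi$-periodicity of $h$ and of $q$, leaving
\begin{equation*}
\int_0^{2\pi}\frac{(h')^2}{[q,q']}\,d\theta=\lambda\int_0^{2\pi}[p,p']\,h^2\,d\theta.
\end{equation*}
Under the standing hypotheses one has $[p,p']>0$, and likewise $[q,q']>0$ (this is exactly the positivity invoked in Lemma~\ref{lemma:ICrescent}), so the left-hand integrand is nonnegative while the sign of the right-hand integrand is that of $\lambda$.

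For $\lambda<0$ the right-hand side is $\le 0$ and the left-hand side is $\ge 0$, so both must vanish; the left-hand side vanishes only if $h'\equiv 0$, whence $h$ is constant, and then \eqref{eq:DiffEq} gives $0=-\lambda h$, forcing $h\equiv 0$ because $\lambda\neq 0$. Thus there is no nontrivial closed eigenvector with $\lambda<0$. At $\lambda=0$ the identity degenerates, so I argue directly: \eqref{eq:DiffEq} becomes $\left(\frac{h'}{[q,q']}\right)'=0$, hence $\frac{h'}{[q,q']}\equiv c$ for a constant $c$, i.e.\ $h'=c\,[q,q']$; periodicity of $h$ forces $\int_0^{2\pi}h'\,d\theta=0$, that is $c\int_0^{2\pi}[q,q']\,d\theta=0$, and since $[q,q']>0$ makes this integral positive we get $c=0$, so $h$ is constant and equals $1$ after scaling. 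The only delicate points are the sign normalizations $[p,p']>0$ and $[q,q']>0$, which make the energy identity sign-definite, and the separate periodicity argument at $\lambda=0$, where the identity itself carries no information.
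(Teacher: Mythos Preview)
Your argument is correct, but you have read the lemma more broadly than the paper intends. In the paper the subsection ``Non-positive eigenvalues'' is split into two lemmas: the present one deals only with $\lambda=0$, and the case $\lambda<0$ is a separate lemma immediately after. For $\lambda=0$ your direct integration---$\left(\tfrac{h'}{[q,q']}\right)'=0$, hence $h'=c\,[q,q']$, and periodicity together with $[q,q']>0$ forces $c=0$---is exactly the paper's proof, phrased the same way.

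Your treatment of $\lambda<0$ via the energy identity
\[
\int_0^{2\pi}\frac{(h')^2}{[q,q']}\,d\theta=\lambda\int_0^{2\pi}[p,p']\,h^2\,d\theta
\]
is a genuinely different route from the paper's. The paper instead invokes its oscillation machinery (Corollary~\ref{cor:Comparison}): a nontrivial closed solution would have two zeros of $h'$, and comparison with the $\lambda=0$ solutions (whose $h'$ has at most one zero by Lemma~\ref{lemma:Eigenvalue0}) gives a contradiction. Your integration-by-parts argument is more self-contained and avoids the preparatory work on the angular variable $\beta$; the paper's argument, on the other hand, reuses the comparison framework that it needs anyway for the full Sturm--Liouville analysis in Section~4, so it costs nothing extra there. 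Both approaches rely on the same sign hypotheses $[p,p']>0$ and $[q,q']>0$.
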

\begin{proof}
The general solution of equation \eqref{eq:DiffEq} with $\lambda=0$ is
$$
h(t)=C_1\int_0^{t} [q,q']dt+C_2. 
$$
This solution is not closed unless $C_1=0$. 
\end{proof}

\begin{lemma}
For $\lambda<0$, there are no closed eigenvectors. 
\end{lemma}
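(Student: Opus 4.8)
The plan is to use an energy (integration-by-parts) argument, which is the standard device for locating the sign of Sturm--Liouville eigenvalues. Suppose, for contradiction, that $u$ is a nontrivial $2\pi$-periodic solution of equation \eqref{eq:DiffEq} for some $\lambda<0$. Multiplying \eqref{eq:DiffEq} by $[p,p']\,u$ and integrating over one period gives
\[
\int_0^{2\pi}\left(\frac{u'}{[q,q']}\right)'u\,d\theta=-\lambda\int_0^{2\pi}[p,p']\,u^2\,d\theta.
\]

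Next I would integrate the left-hand side by parts. The boundary term $\left[\frac{u'}{[q,q']}u\right]_0^{2\pi}$ vanishes: $u$ and $u'$ are $2\pi$-periodic by assumption, and $[q,q']$ is $\pi$-periodic (hence $2\pi$-periodic), since $p(\theta+\pi)=-p(\theta)$ forces $p'(\theta+\pi)=-p'(\theta)$ and $[p,p'](\theta+\pi)=[p,p'](\theta)$, whence $q(\theta+\pi)=-q(\theta)$. This yields the identity
\[
\int_0^{2\pi}\frac{(u')^2}{[q,q']}\,d\theta=\lambda\int_0^{2\pi}[p,p']\,u^2\,d\theta.
\]

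Now I would exploit positivity of the two weights. By hypothesis $[p,p']>0$, and a short computation using $q=\frac{p'}{[p,p']}$ together with $[p',p']=0$ gives $[q,q']=\frac{[p',p'']}{[p,p']^2}$, which is positive because $[p',p'']>0$. Hence the left-hand integral is $\geq 0$, while for $\lambda<0$ the right-hand integral is $\leq 0$. Equality of the two forces both sides to vanish: the vanishing of $\int_0^{2\pi}(u')^2/[q,q']\,d\theta$ gives $u'\equiv 0$, so $u$ is constant, and then $\lambda\int_0^{2\pi}[p,p']\,u^2\,d\theta=0$ with $\lambda\neq 0$ and $[p,p']>0$ forces $u\equiv 0$. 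This contradicts nontriviality, so no closed eigenvector exists for $\lambda<0$.

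The computations involved are all routine; the only points demanding care are verifying the periodicity of $[q,q']$ (so that the boundary term genuinely disappears) and establishing $[q,q']>0$ from the stated convexity hypotheses $[p,p']>0$ and $[p',p'']>0$. Neither is a real obstacle, so the main work is simply assembling the identity with the correct signs and reading off the contradiction. As a sanity check, the same identity with $\lambda=0$ recovers $u'\equiv 0$, consistent with Lemma \ref{lemma:Eigenvalue0}, which confirms that the argument is calibrated correctly.
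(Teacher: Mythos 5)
Your proof is correct, and it takes a genuinely different route from the paper's. The paper stays inside its phase-plane machinery: a closed non-constant solution forces $h'$ to vanish at two points of a period, and then the comparison result (Corollary \ref{cor:Comparison}, which rests on the monotonicity of the winding in Lemma \ref{lemma:ICrescent}) applied with $\lambda<0=\bar\lambda$ produces a $\lambda=0$ solution whose derivative vanishes twice, contradicting the explicit form $h=C_1\int_0^t[q,q']\,dt+C_2$ from Lemma \ref{lemma:Eigenvalue0}, whose derivative $C_1[q,q']$ never vanishes unless $C_1=0$. You instead multiply \eqref{eq:DiffEq} by $[p,p']u$ and integrate by parts, i.e.\ you observe that the operator $T$ is positive semidefinite for the inner product $\left<h_1,h_2\right>=\int_0^{2\pi}h_1h_2[p,p']\,d\theta$ --- in fact your key identity $\int_0^{2\pi}(u')^2/[q,q']\,d\theta=\lambda\int_0^{2\pi}[p,p']u^2\,d\theta$ is exactly the integration by parts the paper performs later, in Section 5, to prove $T$ is self-adjoint. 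Your supporting computations are sound: $q=p'/[p,p']$ does give $[q,q']=[p',p'']/[p,p']^2>0$ under the standing hypotheses $[p,p']>0$, $[p',p'']>0$, and $[q,q']$ is indeed $\pi$-periodic, so the boundary term vanishes. What each approach buys: yours is shorter, self-contained (it does not need Corollary \ref{cor:Comparison} or Lemma \ref{lemma:Eigenvalue0}, and as you note it recovers the $\lambda=0$ case in the same stroke), and is the standard variational localization of Sturm--Liouville spectra; the paper's comparison argument is heavier for this one lemma but is the same engine that later counts eigenvalues and cusps in Theorem \ref{thm:SturmLiouville}, which is presumably why the authors reuse it here rather than switching tools.
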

\begin{proof}
A closed non-constant curve in $(h,h')$ plane must have a vertical tangent. At such a point $h'=0$. If there were another zero of $h'$, Corollary \ref{cor:Comparison} would imply a solution $(0,\bar{h})$ 
with two zeros of $\bar{h}'$, a contradiction with Lemma \ref{lemma:Eigenvalue0}.
\end{proof}

\subsection{Cusps and orientation}

\begin{Proposition}
The cycloids associated to any solution of equation \eqref{eq:DiffEq} are regular except at isolated ordinary cusps.
\end{Proposition}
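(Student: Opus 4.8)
The plan is to exploit the relation \eqref{eq:CurvatureRadius}, namely $\gamma'=r\,p'$, together with the fact (established above) that the curvature radius $r$ of a cycloid is a solution of \eqref{eq:DiffEq}. Since $[p,p']>0$ forces $p'(\theta)\neq 0$ for every $\theta$, the curve $\gamma$ fails to be regular precisely at the zeros of $r$. Thus the entire statement reduces to understanding the zeros of a nontrivial solution $r$ of \eqref{eq:DiffEq}.

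First I would show that every zero of $r$ is simple, and hence isolated. Setting $s=\frac{r'}{[q,q']}$, equation \eqref{eq:DiffEq} becomes the first-order linear system $r'=[q,q']\,s$, $s'=-\lambda\,[p,p']\,r$, whose coefficients are smooth and $2\pi$-periodic. Because $q=p'/[p,p']$ one computes $[q,q']=[p',p'']/[p,p']^2>0$, so the condition $s(\theta_0)=0$ is equivalent to $r'(\theta_0)=0$. If $r$ had a zero $\theta_0$ with $r'(\theta_0)=0$, then $(r(\theta_0),s(\theta_0))=(0,0)$ and uniqueness for the linear system would force $r\equiv 0$. Hence for a genuine cycloid every zero $\theta_0$ of $r$ satisfies $r'(\theta_0)\neq 0$; in particular these zeros are isolated, and between consecutive ones $\gamma$ is regular.

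It then remains to check that each such zero produces an ordinary cusp. Differentiating $\gamma'=r\,p'$ gives $\gamma''=r'p'+r\,p''$ and $\gamma'''=r''p'+2r'p''+r\,p'''$. At a simple zero $\theta_0$ of $r$ we obtain $\gamma'(\theta_0)=0$ and $\gamma''(\theta_0)=r'(\theta_0)\,p'(\theta_0)\neq 0$. A direct computation using $[p',p']=0$ then yields $[\gamma''(\theta_0),\gamma'''(\theta_0)]=2\,r'(\theta_0)^2\,[p',p''](\theta_0)$, which is strictly positive since $[p',p'']>0$. Therefore $\gamma''(\theta_0)$ and $\gamma'''(\theta_0)$ are linearly independent, so the Taylor expansion of $\gamma$ at $\theta_0$ is linearly equivalent to $(t^2,t^3)$; by Example \ref{ex:Cusps} this is an ordinary cusp.

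The only genuinely delicate step is the simplicity of the zeros of $r$, which I expect to be the main obstacle; once the uniqueness argument for the associated linear system is in place, both the regularity away from the cusps and the cusp normal form follow directly from the derivative computations and the convexity inequalities $[p,p']>0$ and $[p',p'']>0$.
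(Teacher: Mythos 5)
Your argument is correct and takes essentially the same route as the paper: both reduce regularity to the zeros of $r$ via \eqref{eq:CurvatureRadius}, rule out $r(\theta_0)=r'(\theta_0)=0$ by uniqueness for the linear equation \eqref{eq:DiffEq} (excluding the trivial solution $r\equiv 0$), and conclude an ordinary cusp from $\gamma''(\theta_0)=r'(\theta_0)p'(\theta_0)\neq 0$. Your explicit verification $[\gamma''(\theta_0),\gamma'''(\theta_0)]=2r'(\theta_0)^2[p',p''](\theta_0)>0$ merely carries out in detail the final step that the paper delegates to Example \ref{ex:Cusps}.
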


\begin{proof}
Since
$$
\gamma'(\theta)= r(\theta) p'(\theta),
$$
$\gamma$ is regular at $\theta$ if and only if $r(\theta)\neq 0$. Differentiating the above equation and making $r(\theta)=0$ we get
$$
\gamma''(\theta)= r'(\theta)p'(\theta).
$$
If $r=r'=0$, then the solution of equation \eqref{eq:DiffEq} is $r=0$. Excluding this trivial case, $r(\theta)=0$ implies $\gamma''(\theta)\neq 0$. From Example \ref{ex:Cusps}
we conclude that $\gamma$ at this point has an ordinary cusp.
\end{proof}

Next lemma says that the orientation of hypocycloids are negative, while the orientation of epicycloids are positive. 

\begin{lemma}
Let $\gamma$ be an eigenvector of $T$ with eigenvalue $\lambda>0$. Then $[\gamma,\gamma']$ has constant sign, positive if $0<\lambda<1$ and negative if $\lambda>1$.
\end{lemma}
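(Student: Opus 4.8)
The plan is to compute $[\gamma,\gamma']$ directly from the formulas already established, namely equation \eqref{eq:Gamma} for $\gamma$ and equation \eqref{eq:GammaLinha} for $\gamma'$, and to show that the resulting expression is a constant multiple of $[p,p']$, whose sign we can control. First I would substitute $\gamma = hp + \frac{h'}{[q,q']}q$ and $\gamma' = r\, p'$ into the determinant, obtaining
\begin{equation*}
[\gamma,\gamma'] = r\left( h\,[p,p'] + \frac{h'}{[q,q']}\,[q,p'] \right).
\end{equation*}
Since $q \parallel p'$ we have $[q,p']=0$, so this collapses to $[\gamma,\gamma'] = r\,h\,[p,p']$. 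Now recall from the discussion after equation \eqref{eq:CurvatureSupport} that when $h$ satisfies equation \eqref{eq:DiffEq} the curvature radius is $r = (1-\lambda)h$, so that $[\gamma,\gamma'] = (1-\lambda)\,h^2\,[p,p']$.

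With this closed form the conclusion is immediate. By hypothesis the unit circle is quadratically convex, which gives $[p,p']>0$ everywhere, and $h^2 \ge 0$. Hence the sign of $[\gamma,\gamma']$ is governed entirely by the factor $(1-\lambda)$: it is nonnegative when $0<\lambda<1$ and nonpositive when $\lambda>1$. To upgrade ``nonnegative'' to ``constant positive sign'' away from cusps, I would note that $h$ cannot vanish identically (otherwise $\gamma$ is trivial), and that the zeros of $h$ are isolated since $h$ solves a second-order linear ODE and a zero of both $h$ and $h'$ forces $h\equiv 0$ by uniqueness. Thus $[\gamma,\gamma']$ has the stated sign, vanishing only at the isolated cusp points where $r=(1-\lambda)h=0$.

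The main obstacle is essentially bookkeeping rather than conceptual: one must be careful that the problem statement refers to ``an eigenvector of $T$,'' so I would first make explicit that $\gamma$ being an eigenvector with eigenvalue $\lambda$ means precisely that its support function $h$ satisfies equation \eqref{eq:DiffEq} with that $\lambda$, which is the content of the double-evolute computation in Section \ref{sec:DoubleEvolutes}. The only genuinely delicate point is the claim of \emph{constant} sign: the factor $(1-\lambda)[p,p']$ never changes sign, so strictly speaking $[\gamma,\gamma']$ changes sign nowhere and merely touches zero at the finitely many cusps. I would phrase the conclusion accordingly, emphasizing that ``constant sign'' is to be read as weakly constant (the determinant does not change sign), with strict inequality holding precisely on the regular part of the curve.
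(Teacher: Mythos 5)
Your proposal is correct and follows essentially the same route as the paper: both substitute equations \eqref{eq:Gamma} and \eqref{eq:GammaLinha} into the determinant, use $[q,p']=0$ to collapse it to $r\,h\,[p,p']$, and invoke the eigenvalue relation (equivalently $r=(1-\lambda)h$) to obtain $[\gamma,\gamma']=(1-\lambda)h^2[p,p']$. Your added remark that zeros of $h$ are isolated (by uniqueness for the second-order ODE) is a slight refinement the paper leaves implicit, but it does not change the argument.
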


\begin{proof}
By equations \eqref{eq:Gamma} and \eqref{eq:GammaLinha}, we have
$$
[\gamma,\gamma']=h\left(h[p,p']+\left( \frac{h'}{[q,q']} \right)'\right).
$$
Since $\gamma$ is an eigenvector of $T$, 
$$
\left( \frac{h'}{[q,q']} \right)'=-\lambda[p,p']h,
$$
and so
$$
[\gamma,\gamma']=h^2[p,p'](1-\lambda),
$$
thus proving the lemma.
\end{proof}

\subsection{ Minkowskian Cycloids}\label{sec:MinkCyc}

In this section we solve equation \eqref{eq:DiffEq} with $\lambda=1$. In this case it is more interesting to consider the curvature radius $r$ than the support function $h$ (see section
 \ref{sec:DoubleEvolutes}). 
 
\begin{Proposition}\label{Prop:Eigenvalue1}
Any function of the form $r(\theta)=[v, q(\theta)]$, $v\in\R^2$, satisfies equation \eqref{eq:DiffEq} with $\lambda=1$. Thus, given initial conditions $r(\theta_0)$ and $r'(\theta_0)$,
the solution is 
\begin{equation}\label{eq:SolEig1}
r(\theta)=\frac{1}{[q,q'](\theta_0)}\left(   [q(\theta_0),q(\theta)]r'(\theta_0)-[q'(\theta_0),q(\theta)]r(\theta_0)    \right).
\end{equation}
As a consequence, 
$$
r(\theta_0+\pi)=-r(\theta_0),\ \ \ r'(\theta_0+\pi)=-r'(\theta_0),
$$
and so $\lambda=1$ is a double eigenvalue of equation \eqref{eq:DiffEq}. The corresponding cycloids are not closed. 
\end{Proposition}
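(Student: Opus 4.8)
The plan is to dispatch the five assertions in order: the first four reduce to direct computation with the structural identities for $p$ and $q$, and only the last one—non-closedness—requires a genuine idea.

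First I would verify that $r=[v,q]$ solves \eqref{eq:DiffEq} with $\lambda=1$. Differentiating gives $r'=[v,q']$, and since \eqref{eq:ParameterP} yields $q'=-[q,q']\,p$, we get $\frac{r'}{[q,q']}=-[v,p]$. Differentiating once more and using $p'=[p,p']\,q$ from \eqref{eq:ParameterQ} gives $\left(\frac{r'}{[q,q']}\right)'=-[v,p']=-[p,p']\,[v,q]=-[p,p']\,r$, which is precisely \eqref{eq:DiffEq} with $\lambda=1$.

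For the explicit formula I would observe that \eqref{eq:DiffEq} is a second-order linear ODE, so its solution space is two-dimensional; since the map $v\mapsto[v,q(\cdot)]$ is linear and injective (the component functions $q_1,q_2$ are linearly independent, as $q$ sweeps out all directions and cannot lie on a line through the origin), the functions $[v,q]$ already exhaust the solution space. Solving the linear system $[v,q(\theta_0)]=r(\theta_0)$, $[v,q'(\theta_0)]=r'(\theta_0)$ for $v$ in the basis $\{q(\theta_0),q'(\theta_0)\}$ of $\R^2$ gives $v=\frac{1}{[q,q'](\theta_0)}\bigl(r'(\theta_0)q(\theta_0)-r(\theta_0)q'(\theta_0)\bigr)$, and substituting back into $r(\theta)=[v,q(\theta)]$ produces \eqref{eq:SolEig1}. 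The anti-periodicity relations then follow immediately: the hypothesis $p(\theta+\pi)=-p(\theta)$ together with \eqref{eq:ParameterQ} gives $q(\theta+\pi)=-q(\theta)$ and hence $q'(\theta+\pi)=-q'(\theta)$, so $r(\theta_0+\pi)=[v,-q(\theta_0)]=-r(\theta_0)$ and likewise $r'(\theta_0+\pi)=-r'(\theta_0)$. In particular every solution is $2\pi$-periodic, so the entire two-dimensional solution space consists of eigenvectors, i.e., $\lambda=1$ is a double eigenvalue.

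The main point—and the step I expect to require the most care—is showing that the associated cycloids do not close. Since the cycloid is recovered from its curvature radius through \eqref{eq:CurvatureRadius}, closedness over one period amounts to the vanishing of the displacement vector $\Delta\gamma=\int_0^{2\pi}r\,p'\,d\theta$. The device I would use is to pair this vector with an arbitrary $w\in\R^2$ via the determinant: using $p'=[p,p']\,q$ one obtains $[w,\Delta\gamma]=\int_0^{2\pi}[v,q]\,[w,q]\,[p,p']\,d\theta$, a symmetric bilinear form in $v$ and $w$. Taking $w=v$ gives $[v,\Delta\gamma]=\int_0^{2\pi}[v,q]^2\,[p,p']\,d\theta$, which is strictly positive whenever $v\neq0$, because $[p,p']>0$ by hypothesis and $[v,q(\theta)]$ cannot vanish identically. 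Therefore $\Delta\gamma\neq0$, the curve fails to return to its starting point, and the cycloid is not closed. The positivity of $[p,p']$ is exactly what makes this argument work, and it is the quadratic convexity assumption on the unit circle that guarantees it.
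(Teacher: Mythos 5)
Your proof is correct, and while its first half mirrors the paper, the decisive step is handled by a genuinely different argument. For the formula \eqref{eq:SolEig1} you run the computation in the opposite direction: you check that the ansatz $r=[v,q]$ solves \eqref{eq:DiffEq} with $\lambda=1$ (using $p'=[p,p']\,q$ and $q'=-[q,q']\,p$ from \eqref{eq:ParameterQ} and \eqref{eq:ParameterP}) and then recover \eqref{eq:SolEig1} by solving a $2\times 2$ linear system for $v$ in the basis $\{q(\theta_0),q'(\theta_0)\}$, whereas the paper verifies \eqref{eq:SolEig1} directly by differentiating it twice. These are the same computation repackaged; your version makes the two-dimensionality of the solution space, and hence the doubleness of the eigenvalue, completely transparent, at the modest cost of invoking uniqueness for the ODE and the injectivity of $v\mapsto[v,q(\cdot)]$, both of which you justify. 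The real divergence is the non-closedness. The paper reduces, via anti-periodicity, to the half-period condition \eqref{eq:PiPer}, chooses $\theta_0$ with $r(\theta_0)=0$, and argues that $r$ keeps a fixed sign on $[\theta_0,\theta_0+\pi]$, in contradiction with \eqref{eq:PiPer} --- where the final step (that the integral of a one-signed $r$ against $p'$ cannot vanish) is left implicit and rests on the tangent directions $p'(t)$ sweeping an open half-plane over an interval of length $\pi$. Your argument bypasses all of this: pairing the displacement $\Delta\gamma=\int_0^{2\pi}r\,p'\,d\theta$ with $v$ itself and using $p'=[p,p']\,q$ gives $[v,\Delta\gamma]=\int_0^{2\pi}[v,q]^2\,[p,p']\,d\theta>0$ for $v\neq 0$, a manifestly positive quadratic form requiring no special base point and no sign discussion (and $v=0$ is exactly the trivial solution $r\equiv 0$, so nothing is lost); strict positivity holds since $[v,q(\theta)]$ vanishes only where $q(\theta)\parallel v$, a finite set. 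Both approaches rule out closing after any number of turns, since $rp'$ is $\pi$-periodic and the displacement per circuit is the same nonzero vector, but yours makes the nonvanishing explicit and self-contained, which is a genuine improvement in rigor over the paper's sketch.
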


\begin{proof}
Differentiating equation \eqref{eq:SolEig1} and dividing by $[q,q'](\theta)$ we obtain
\begin{equation*}
\frac{r'(\theta)}{[q,q'](\theta)}=\frac{1}{[q,q'](\theta_0)}\left(  - [q(\theta_0),p(\theta)]r'(\theta_0)+[q'(\theta_0),p(\theta)]r(\theta_0)    \right).
\end{equation*}
Differentiating this equation and dividing by $[p,p'](\theta)$ we get
\begin{equation*}
\frac{1}{[p,p'](\theta)}  \left(  \frac{r'(\theta)}{[q,q'](\theta)} \right)'=\frac{1}{[q,q'](\theta_0)}\left(  - [q(\theta_0),q(\theta)]r'(\theta_0)+[q'(\theta_0),q(\theta)]r(\theta_0)    \right).
\end{equation*}
We conclude that $r(\theta)$ defined by equation \eqref{eq:SolEig1} satisfies equation \eqref{eq:DiffEq}.

To prove the last assertion write
$$
\gamma(\theta)=\gamma(\theta_0)+\int_{\theta_0}^{\theta} r(t)p'(t)dt.
$$
Since 
$$
\int_{\theta_0}^{\theta_0+\pi} r(t)p'(t)dt=\int_{\theta_0+\pi}^{\theta_0+2\pi} r(t)p'(t)dt,
$$
we have that $\gamma(\theta_0+2\pi)=\gamma(\theta_0)$ if and only if 
\begin{equation}\label{eq:PiPer}
\int_{\theta_0}^{\theta_0+\pi} r(t)p'(t)dt=0.
\end{equation}
Choose $\theta_0$ 
such that $r(\theta_0)=0$. Then equation \eqref{eq:SolEig1} says that $r$ does not change sign in the interval $[\theta_0,\theta_0+\pi]$, which 
is a contradiction with Equation \eqref{eq:PiPer}.
\end{proof}

\begin{example}\label{ex:OpenCycLp}
Consider the the $\mathcal{L}_p$ space, whose unit circle is $|x|^p+|y|^p=1$. This unit circle can be parameterized by $(\cos(t)^{2/p},\sin(t)^{2/p})$, $0\leq t\leq 2\pi$. Then
$[p,p']=\frac{2}{p} \cos(t)^{2/p-1}\sin(t)^{2/p-1}$. Thus
$$
q=\frac{p'}{[p,p']}=\left( -\sin(t)^{2/q}, \cos(t)^{2/q} \right),
$$
where $\frac{1}{q}+\frac{1}{p}=1$. Then $[q,q']=\frac{2}{q} \cos(t)^{2/q-1}\sin(t)^{2/q-1}$. Observe that $[p,p']\cdot[q,q']=\frac{4}{pq}$.

Consider Minkowskian cycloid with $r(\frac{\pi}{4})=0$, $r'(\frac{\pi}{4})=1$. Since $[q,q'](\frac{\pi}{4})=\frac{2}{q}$, we have
$$
r(t)=\frac{q}{2}[q(\frac{\pi}{4}), q(t)].
$$
Since $\gamma'(t)=r(t)p'(t)$, we can write
$$
\gamma'(t)=c\left( \sin(t)\cos(t)+\sin(t)^{2/q+1}\cos(t)^{2/p-1}, \sin(t)\cos(t)+\sin(t)^{2/p-1}\cos(t)^{2/q+1}\right).
$$
where $c$ is a constant. Integrating we obtain $\gamma$ (see Figure \ref{fig:OpenCyc}).
\end{example}

\begin{figure}[htb]
 \centering
 \includegraphics[width=0.50\linewidth]{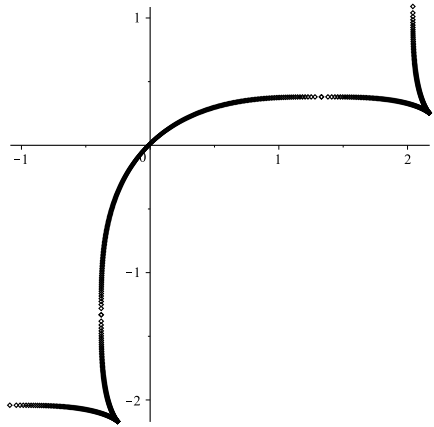}
 \caption{ The cycloid of example \ref{ex:OpenCycLp} with $p=3$. }
\label{fig:OpenCyc}
\end{figure}

\subsection{General Sturm-Liouville equation}

A general Sturm-Liouville equation is given by 
\begin{equation}\label{eq:SturmGeneral}
(au')'+(\lambda r-b)u=0,
\end{equation}
where $a>0$ and $r>0$ (see \cite[ch. 8]{CodLev}). Thus equation \eqref{eq:DiffEq} is a Sturm-Liouville equation with $b=0$, $a=\frac{1}{[q,q']}$ and $r=[p,p']$. 
It is clear that the condition $b=0$ is equivalent to $u=1$ being an eigenvector associated to the eigenvalue $\lambda=0$.
Next proposition says that the conditions $a=\frac{1}{[q,q']}$ and $r=[p,p']$ are equivalent to $\lambda=1$ being a double eigenvector of equation \eqref{eq:SturmGeneral}.

\begin{Proposition}
Consider equation \eqref{eq:SturmGeneral} with $b=0$. Then there exists a symmetric locally convex curve $p$ such that $r=[p,p']$ and $a=[q,q']^{-1}$ if and only if 
 $\lambda=1$ is a double eigenvalue of equation \eqref{eq:SturmGeneral}. 
\end{Proposition}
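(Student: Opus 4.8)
The plan is to read this Proposition as the converse bookkeeping to Proposition \ref{Prop:Eigenvalue1}: it pins down exactly which equations \eqref{eq:SturmGeneral} with $b=0$ arise from a symmetric normed plane. Throughout I would work with $\pi$-periodic coefficients $a,r$, since the existence of a symmetric $p$ forces $r=[p,p']$ and $a=[q,q']^{-1}$ to be $\pi$-periodic: from $p(\theta+\pi)=-p(\theta)$ we get $p'(\theta+\pi)=-p'(\theta)$, hence $[p,p']$ is $\pi$-periodic, and likewise $q(\theta+\pi)=-q(\theta)$ makes $[q,q']$ $\pi$-periodic. This is also the setting in which \eqref{eq:SturmGeneral} occurs in the paper, so no generality is lost.

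The direct implication is immediate. If such a $p$ exists, then \eqref{eq:SturmGeneral} is literally equation \eqref{eq:DiffEq}, and Proposition \ref{Prop:Eigenvalue1} exhibits the two-parameter family $r(\theta)=[v,q(\theta)]$, $v\in\R^2$, of solutions for $\lambda=1$. Since $q=p'/[p,p']$ is $2\pi$-periodic, all these solutions are $2\pi$-periodic; as the full solution space of a second-order equation is two-dimensional, the space of $2\pi$-periodic solutions is exactly two-dimensional, i.e. $\lambda=1$ is a double eigenvalue.

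For the converse I would reverse the passage from $p$ to $q$. Let $u_1,u_2$ be a basis of the (two-dimensional, by hypothesis) space of $2\pi$-periodic solutions at $\lambda=1$, and set $q=(u_1,u_2)$. The quantity $a\,[q,q']=a(u_1u_2'-u_2u_1')$ is a nonzero constant by the Wronskian identity underlying Lemma \ref{lem:Det1}, so after rescaling the basis and swapping $u_1,u_2$ if needed to fix the sign I may assume $[q,q']=1/a>0$. I then define $p:=-a\,q'=-q'/[q,q']$, matching \eqref{eq:ParameterP}. Differentiating the vector form $(aq')'+rq=0$ of the equation gives $p'=-(aq')'=rq$, from which short computations yield $[p,q]=a[q,q']=1$, $[p,p']=ar[q,q']=r$, and $q=p'/[p,p']$, so \eqref{eq:ParameterQ} holds and $q$ is genuinely dual to $p$. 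Local convexity also drops out, since $[p,p']=r>0$ and $[p',p'']=r^2[q,q']=r^2/a>0$.

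The remaining, and main, difficulty is to show that $p$ is \emph{symmetric}, i.e. $q(\theta+\pi)=-q(\theta)$, equivalently that the half-period shift $S\colon u(\theta)\mapsto u(\theta+\pi)$ acts as $-\mathrm{Id}$ on the solution space. Since $a,r$ are $\pi$-periodic, $S$ preserves solutions; its matrix $M$ satisfies $\det M=1$ (again by constancy of the Wronskian) and $M^2=\mathrm{Id}$ (all solutions being $2\pi$-periodic), so $M=\pm\mathrm{Id}$. The crux is to exclude $M=\mathrm{Id}$, which would make every solution $\pi$-periodic and leave $p$ merely $\pi$-periodic rather than antipodally symmetric. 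I would rule this out with the oscillation machinery already in place: the constant $\lambda=0$ eigenfunction of Lemma \ref{lemma:Eigenvalue0} has no zeros and is the ground state, and the strict monotonicity of the total variation $\Delta$ in $\lambda$ (Lemma \ref{lemma:ICrescent}) forces the first positive $\lambda$ carrying a nontrivial solution invariant (up to sign) under $S$ to be of antiperiodic type rather than periodic; hence at the double eigenvalue $\lambda=1$ the closing gap is antiperiodic and $M=-\mathrm{Id}$. With $q$ then $\pi$-antiperiodic and $a$ $\pi$-periodic, $p(\theta+\pi)=-a(\theta+\pi)q'(\theta+\pi)=a(\theta)q'(\theta)=-p(\theta)$, so $p$ is symmetric and the construction is complete. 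I expect the exclusion of the case $M=\mathrm{Id}$ to be the delicate point, as it is precisely where "double eigenvalue" must be upgraded to "antiperiodic double eigenvalue".
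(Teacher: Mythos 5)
Your forward direction and your reverse-duality construction are, in substance, the paper's own proof. The paper encodes the same information in the relations $\frac{a}{r}=\frac{[q,q']}{[q',q'']}$ and $\frac{a'}{r}=-\frac{[q,q'']}{[q',q'']}$ — the second of which is exactly your constancy of the Wronskian $a[q,q']$, since $\frac{a'}{a}=-\frac{[q,q'']}{[q,q']}=-\left(\log[q,q']\right)'$ — then defines $p$ by \eqref{eq:ParameterP} and gets local convexity from $a>0$, $r>0$ just as you do. The one genuine difference is that the paper stops there: its proof never verifies the antipodal symmetry $p(\theta+\pi)=-p(\theta)$ at all, whereas you correctly single this out as the crux.

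However, your proposed way of settling that crux — excluding the half-period monodromy $M=\mathrm{Id}$ by the oscillation machinery of Lemmas \ref{lemma:ICrescent} and \ref{lemma:Eigenvalue0} — cannot be made to work, because under your reading of the hypothesis ($\lambda=1$ a double eigenvalue of the $2\pi$-periodic problem, with $a,r$ positive and $\pi$-periodic, $b=0$) the case $M=\mathrm{Id}$ genuinely occurs. Take $a\equiv 1$, $r\equiv 4$, i.e.\ $u''+4\lambda u=0$: then $u\equiv 1$ is the $\lambda=0$ eigenfunction, and at $\lambda=1$ the full solution space is spanned by $\cos 2\theta$ and $\sin 2\theta$, all $2\pi$-periodic, so $\lambda=1$ is a double eigenvalue of the $2\pi$-periodic problem — yet every $\lambda=1$ solution is $\pi$-periodic, so $M=\mathrm{Id}$, and no symmetric $p$ exists: its dual $q$ would have to be $\pi$-antiperiodic with components solving $u''+4u=0$ (Proposition \ref{Prop:Eigenvalue1} together with the reparameterization lemma), and a nonzero function cannot be both $\pi$-periodic and $\pi$-antiperiodic (here the curve $q$ built from a solution basis runs twice around an ellipse). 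Your interlacing intuition is true but beside the point: the first double pair above $\lambda_0=0$ is indeed antiperiodic — in this example it sits at $\lambda=1/4$ — but nothing in the hypothesis says $\lambda=1$ is that first pair; in the example the antiperiodic double is at $1/4$ and the periodic double at $1$. So the exclusion of $M=\mathrm{Id}$ is not merely delicate, it is false from the stated hypothesis; the statement must be read with ``double eigenvalue'' in the paper's own bookkeeping, namely $A(1,\pi)\in\mathcal{M}^{-1}$, the antiperiodic double eigenvalue that Proposition \ref{Prop:Eigenvalue1} actually produces and that Theorem \ref{thm:SturmLiouville} catalogues as $\lambda_1^1=\lambda_1^2=1$ with $k=1$ odd. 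Under that reading $M=-\mathrm{Id}$ is part of the hypothesis, symmetry follows from your final displayed computation, and your argument closes — while your analysis also exposes that the paper's own proof silently omits the symmetry verification altogether.
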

\begin{proof}
It follows from Proposition \ref{Prop:Eigenvalue1} that for equations of the form \eqref{eq:DiffEq}, $\lambda=1$ is a double eigenvalue. For the converse, assume that $x(t)$ and $y(t)$ 
are linearly independent solutions of equation \eqref{eq:SturmGeneral} with $\lambda=1$. Writing $q(t)=(x(t),y(t))$ we have 
$$
\frac{a'}{r}q'+\frac{a}{r}q''=-q.
$$
which implies 
\begin{equation}\label{eq:ar}
\frac{a}{r}=\frac{[q,q']}{[q',q'']};\ \ \frac{a'}{r}=-\frac{[q,q'']}{[q',q'']}.
\end{equation}
Defining $p$ by equation \eqref{eq:ParameterP}, we obtain
$[p,p'] \cdot [q,q']^2=[q',q'']$. Taking this into account, the substitution of formulas \eqref{eq:ar} in equation \eqref{eq:SturmGeneral} lead to equation \eqref{eq:DiffEq}. This means that, possibly rescaling 
and interchanging the coordinates of $q$, we may assume $a=[q,q']^{-1}$ and $r=[p,p']$ .
Finally the hypotheses $a>0$ and $r>0$ imply that $[q',q'']>0$ and $[p',p'']>0$, so $p$ and $q$ are locally convex. 
\end{proof}

\section{Sturm-Liouville theory: Closed hypocycloids and epicycloids}

\subsection{The space of $2\times 2$ matrices with determinant $1$}

Denote by ${\mathcal M}=SL_2(\R)$ the set of $2\times 2$ matrices with determinant $1$. This set can be decomposed in three connected subsets: ${\mathcal M}^{+}$, the set of matrices 
with real positive eigenvalues, ${\mathcal M}^{-}$, the set of matrices with real negative eigenvalues, and ${\mathcal M}^{0}$, the set of matrices with complex eigenvalues. 
The common boundary of ${\mathcal M}^{+}$ and ${\mathcal M}^{0}$ is the set ${\mathcal M}^{1}$ of matrices with both eigenvalues equal to $1$. The common boundary
of ${\mathcal M}^{-}$ and ${\mathcal M}^{0}$ is the set ${\mathcal M}^{-1}$ of matrices with both eigenvalues equal to $-1$.  There is no path from $\mathcal{M}^{+}$ to $\mathcal{M}^{-}$
without passing through $\mathcal{M}^{0}$.

\subsection{Main results}

We shall denote $A(\lambda)=A(\lambda, \pi)$. By lemma \ref{lem:Det1},  $A(\lambda)\in{\mathcal M}$. The eigenvalues of equation \eqref{eq:DiffEq} are the values of $\lambda$ for which $A(\lambda)\in {\mathcal M}^{1}\cup{\mathcal M}^{-1}$.

As we have seen, $0$ is a single and $1$ is a double eigenvalue of equation \eqref{eq:DiffEq}. Thus we write $\lambda_0=0$, $\lambda_1^1=\lambda_1^2=1$.
In this section we shall prove the following theorem:

\begin{thm}\label{thm:SturmLiouville}
Consider $k\geq 2$:
\begin{itemize}
\item For each $k\in N$, $k$ even, we can find eigenvalues $\lambda_k^1\leq \lambda_k^2$ such that $A(\lambda_k^i)\in {\mathcal M}^{1}$ and $A(\lambda)\in{\mathcal M}^{+}$, for each 
$\lambda\in(\lambda_k^1,\lambda_k^2)$. 
\item For each $k\in N$, $k$ odd, we can find eigenvalues $\lambda_{k}^1\leq \lambda_{k}^2$ such that $A(\lambda_{k}^i)\in {\mathcal M}^{-1}$ and $A(\lambda)\in{\mathcal M}^{-}$, for each 
$\lambda\in(\lambda_{k}^1,\lambda_{k}^2)$ and  $\lambda_1^1=\lambda_1^2=1$. 
\item For each $k\in \N$, we have $\lambda_{k-1}^2<\lambda_k^1$ and $\lambda_k^2<\lambda_{k+1}^1$. Moreover, for $\lambda\in (\lambda_{k-1}^2,\lambda_k^1)\cup (\lambda_k^2,\lambda_{k+1}^1)$, $A(\lambda)\in{\mathcal M}^{0}$.
\end{itemize}
\end{thm}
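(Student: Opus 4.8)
The plan is to translate the statement into a description of the path $\lambda\mapsto A(\lambda)$ in $\mathcal{M}=SL_2(\R)$ and to control it through the monotone rotation of solutions furnished by Lemma \ref{lemma:ICrescent} and Corollary \ref{cor:Comparison}. The first step is a reduction: since $p(\theta+\pi)=-p(\theta)$, the dual parameterization satisfies $q(\theta+\pi)=-q(\theta)$, so both coefficients $[p,p']$ and $[q,q']$ of equation \eqref{eq:DiffEq} are $\pi$-periodic. Hence the flow has period $\pi$, the matrix $A(\lambda)=A(\lambda,\pi)$ is the genuine monodromy, and $A(\lambda,2\pi)=A(\lambda)^2$. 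A $2\pi$-periodic solution is exactly a $(+1)$-eigenvector of $A(\lambda)^2$, so the eigenvalues of \eqref{eq:DiffEq} are precisely the $\lambda$ for which $A(\lambda)$ has an eigenvalue $\pm1$, i.e. $A(\lambda)\in\mathcal{M}^1\cup\mathcal{M}^{-1}$; an eigenvalue $+1$ (trace $2$, $\mathcal{M}^1$) produces a $\pi$-periodic solution, while an eigenvalue $-1$ (trace $-2$, $\mathcal{M}^{-1}$) produces an antiperiodic one, $u(\theta+\pi)=-u(\theta)$. This already pins down $\lambda_0=0\in\mathcal{M}^1$ and $\lambda_1=1\in\mathcal{M}^{-1}$ via Lemma \ref{lemma:Eigenvalue0} and Proposition \ref{Prop:Eigenvalue1}.

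The central object is then the rotation number of the flow induced on directions. Working with the Prüfer angle $\beta$ of $(h,\tfrac{h'}{[q,q']})$, whose evolution $\tfrac{d\beta}{d\theta}=\lambda[p,p']\cos^2\beta+[q,q']\sin^2\beta>0$ was computed in Lemma \ref{lemma:ICrescent}, I would define $\tau(\lambda)$ as the total advance of $\beta$ over $[0,\pi]$ (the translation number of the induced circle map on $\mathrm{P}\R^1=\R/\pi\Z$). By Lemma \ref{lemma:ICrescent} the field is positive and increasing in $\lambda$, and Corollary \ref{cor:Comparison} is the Sturm comparison that makes $\tau$ continuous, nondecreasing, and strictly increasing wherever $\tau\notin\pi\Z$; one checks $\tau(0)=0$ (at $\lambda=0$ the flow is the shear of Lemma \ref{lemma:Eigenvalue0}) and $\tau(\lambda)\to\infty$ as $\lambda\to\infty$ by bounding $\tfrac{d\beta}{d\theta}$ below by $\lambda[p,p']\cos^2\beta$.

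The band--gap structure then follows. The map $A(\lambda)$ has a real eigendirection, equivalently $A(\lambda)\in\mathcal{M}^+\cup\mathcal{M}^-\cup\mathcal{M}^1\cup\mathcal{M}^{-1}$, exactly when $\tau(\lambda)\in\pi\Z$; the locking set $\{\tau=k\pi\}=[\lambda_k^1,\lambda_k^2]$ is the $k$-th band, its endpoints being where the fixed directions are tangent ($A\in\mathcal{M}^{\pm1}$) and its interior where they are hyperbolic ($A\in\mathcal{M}^{\pm}$); on the complementary open intervals $\tau$ is non-integer, there is no real eigendirection, and $A(\lambda)\in\mathcal{M}^0$, which are exactly the sets $(\lambda_{k-1}^2,\lambda_k^1)$. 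The parity is obtained by following the genuine vector angle (mod $2\pi$) along an eigendirection: its advance equals $k\pi$, so the eigenvalue of $A(\lambda)$ on that direction has sign $(-1)^k$; thus $k$ even gives positive eigenvalues, $\mathcal{M}^1$ at the endpoints and $\mathcal{M}^+$ inside, while $k$ odd gives $\mathcal{M}^{-1}$ and $\mathcal{M}^-$. Since $\tau$ is continuous with $\tau(0)=0$ and $\tau\to\infty$, every level $k\pi$ is attained, producing the full sequence.

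The step I expect to be the main obstacle is the mode-locking rigidity: proving that on the open band the trace is \emph{strictly} beyond $\pm2$ (so the interior is genuinely $\mathcal{M}^{\pm}$ and only the two endpoints lie in $\mathcal{M}^{\pm1}$) and that consecutive bands are separated by a \emph{nonempty} elliptic gap, i.e. $\lambda_{k-1}^2<\lambda_k^1$ with $A\in\mathcal{M}^0$ strictly between. Both require upgrading the monotonicity of Lemma \ref{lemma:ICrescent} to a strict transversality statement --- ruling out that $\tau$ lingers at a non-integer value and forcing the rotation number to cross each integer level on a (possibly degenerate) closed interval with genuinely elliptic behaviour on either side --- which is where the quantitative form of Corollary \ref{cor:Comparison} must be used.
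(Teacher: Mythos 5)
Your overall strategy is the one the paper itself uses, just repackaged: the paper's interval of indices $I(\lambda)=[I_{min}(\lambda),I_{max}(\lambda)]$ from Lemma \ref{lemma:Equivalences} is exactly your locking interval, since the Poincar\'e rotation number equals $k\pi$ precisely when some direction has Pr\"ufer advance $k\pi$, i.e. $k/2\in I(\lambda)$, and Lemma \ref{lemma:ICrescent} plays the same role in both write-ups. One small imprecision: ``the total advance of $\beta$ over $[0,\pi]$'' depends on the initial direction, so your $\tau$ must be taken either as the bona fide translation number or as the pair $I_{min},I_{max}$ (as the paper does) before your continuity and monotonicity claims are meaningful. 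Your existence argument ($\tau(0)=0$, $\tau\to\infty$ as $\lambda\to\infty$) is fine, and is in fact more explicit than the paper, which takes the unboundedness for granted.

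The step you flag as the main obstacle --- mode-locking rigidity --- is indeed the crux, and as written your proposal leaves it open; but no transversality input beyond what is already available is needed, so the gap is closable. Two observations finish it. First, Lemma \ref{lemma:ICrescent} gives strict monotonicity in $\lambda$ of the advance for \emph{every fixed} initial direction, hence $I_{min}$ and $I_{max}$ are strictly increasing and continuous, and each band $\{\lambda: k/2\in I(\lambda)\}$ is a closed interval $[\lambda_k^1,\lambda_k^2]$. Second, an elementary fact about lifts: the advance $g(\theta)=F(\theta)-\theta$ of an increasing lift commuting with $\theta\mapsto\theta+\pi$ cannot attain two distinct multiples of $\pi$ (if $g(\theta_1)=k\pi$ and $g(\theta_2)=(k+1)\pi$ with $\theta_2\in(\theta_1,\theta_1+\pi)$, then $F(\theta_2)<F(\theta_1+\pi)=\theta_1+(k+1)\pi<\theta_2+(k+1)\pi$, a contradiction); hence $I(\lambda)$ has length $<1/2$, both eigendirections of $A(\lambda)$ carry the same index, and no $\lambda$ lies in two bands. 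Now for $\lambda$ in the open band, strict monotonicity forces $I_{min}(\lambda)<k/2<I_{max}(\lambda)$, so the advance takes values on both sides of $k\pi$: $A(\lambda)$ is neither $\pm Id$ (advance nonconstant) nor parabolic (a parabolic projective map has one-sided advance, which would put $k/2$ at an endpoint of $I(\lambda)$), hence hyperbolic with eigenvalue sign $(-1)^k$, i.e. in $\mathcal{M}^{+}$ or $\mathcal{M}^{-}$ according to parity. At the band endpoints hyperbolicity is impossible because it is an open condition (a hyperbolic fixed direction, with its advance pinned in $\pi\Z$, persists for nearby $\lambda$, contradicting minimality or maximality), so $A\in\mathcal{M}^{\pm1}$ there; and on the complement of the bands $I(\lambda)$ contains no half-integer, so there is no real eigendirection and $A(\lambda)\in\mathcal{M}^{0}$, which also yields the strict separation $\lambda_k^2<\lambda_{k+1}^1$. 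This bookkeeping is exactly what the paper compresses into Lemma \ref{lemma:Equivalences} and the first paragraph of its proof --- compresses rather loosely, in fact: an eigenvector of integer index by itself only forces a positive eigenvalue, not eigenvalue $1$, so the strictness argument above is needed there too, and supplying it would make your proof complete and somewhat more careful than the original.
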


This theorem holds for general Sturm-Liouville equations \eqref{eq:SturmGeneral} (see \cite{CodLev}). We give here a geometric proof that holds only for 
Sturm-Liouville equations of type \eqref{eq:DiffEq}.

\begin{lemma}\label{lemma:Equivalences}
For each $\lambda$, consider the interval $I(\lambda)=[I_{min}(\lambda),I_{max}(\lambda)]$ of indexes when we vary $(h(0),h'(0))$. 
We have that $k/2\in I(\lambda)$ if and only if $A(\lambda)$ has an eigenvector $(h(0),h'(0))$ of index $k/2$. In this case,
$A(\lambda)\in\mathcal{M}^1$, for $k$ even, or $A(\lambda)\in\mathcal{M}^{-1}$, for $k$ odd.
\end{lemma}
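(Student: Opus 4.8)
The plan is to run a shooting argument with the Prüfer angle $\beta$ in the phase plane $(h,\tfrac{h'}{[q,q']})$. First I would record the two structural facts about the flow of \eqref{eq:DiffEq}. By Lemma \ref{lemma:ICrescent} the angle $\beta$ is strictly increasing in $\theta$, so over $[0,\pi]$ the number $\beta(\pi)-\beta(0)$ is well defined and positive, and the index of $(h(0),h'(0))$ is $\tfrac{1}{2\pi}(\beta(\pi)-\beta(0))$. Because \eqref{eq:DiffEq} is linear, the solution issuing from the antipodal direction is the negative of the original one, so $\beta(\theta;\beta(0)+\pi)=\beta(\theta;\beta(0))+\pi$; hence the index is a continuous function of the initial angle that is invariant under $\beta(0)\mapsto\beta(0)+\pi$. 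Letting the initial direction range over all of $S^1$, its image is exactly the interval $I(\lambda)=[I_{min}(\lambda),I_{max}(\lambda)]$.

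Next I would establish the correspondence between half-integer indices and eigenvectors. The index of $(h(0),h'(0))$ equals $k/2$ precisely when $\beta(\pi)-\beta(0)=k\pi$, that is, when the terminal vector $A(\lambda)\bigl(h(0),\tfrac{h'}{[q,q']}(0)\bigr)$ lies on the same ray as the initial vector (for $k$ even) or on the opposite ray (for $k$ odd). Equivalently, index $k/2$ is attained if and only if $(h(0),h'(0))$ is an eigenvector of $A(\lambda)$, with positive eigenvalue when $k$ is even and negative eigenvalue when $k$ is odd. Since $k/2\in I(\lambda)$ means by definition that some initial direction realises this index, this already yields the equivalence ``$k/2\in I(\lambda)$ iff $A(\lambda)$ has an eigenvector of index $k/2$''.

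It then remains to identify the conjugacy class of $A(\lambda)$. Here I would invoke Lemma \ref{lem:Det1}: $A(\lambda)\in SL_2(\R)$, so its eigenvalues are $\mu$ and $1/\mu$. An eigenvector of index $k/2$ produces a real eigenvalue of sign $(-1)^{k}$, which forces \emph{both} eigenvalues to be real of that sign; the point to be proved is that their common modulus is $1$, giving $A(\lambda)\in\mathcal{M}^{1}$ for $k$ even and $A(\lambda)\in\mathcal{M}^{-1}$ for $k$ odd. The decisive quantitative input is that $I(\lambda)$ has width strictly less than $1/2$: using the order preservation of the monotone flow (from $d\beta/d\theta>0$) together with the $\pi$-shift symmetry above, any two initial angles within $\pi$ of one another have indices differing by less than $1/2$, so the range of the index cannot contain two distinct half-integers. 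I would then argue that the eigendirection realising index $k/2$ sits at an extreme value of the index function, where the displacement $\beta(\pi)-\beta(0)-k\pi$ has a critical point; this degeneracy is exactly the parabolic condition $\mu=1/\mu$, i.e.\ $\mu=(-1)^{k}$.

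The main obstacle is precisely this last step: upgrading ``the eigenvalue has the correct sign'' to ``the eigenvalue equals $(-1)^{k}$'', equivalently excluding the hyperbolic classes $\mathcal{M}^{+}$ and $\mathcal{M}^{-}$. A hyperbolic $A(\lambda)$ does possess a real eigendirection of the correct sign, but its index then lies in the \emph{interior} of $I(\lambda)$ rather than at an endpoint; so the genuine content of the lemma is that the half-integer $k/2$ being the attained extremal value is tied to the parabolic (band-edge) situation, and the honest difficulty is to show that the monodromy at such a point is a shear rather than a genuine hyperbolic map. I expect the careful bookkeeping of where the critical points of the index function fall to be the delicate part, and I would rely on the width estimate $|I_{max}-I_{min}|<1/2$ together with the comparison principle of Corollary \ref{cor:Comparison} to pin it down.
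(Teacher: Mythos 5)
Your first two paragraphs are, in substance, exactly the paper's proof, which consists of a single observation: $k/2\in I(\lambda)$ iff the index $k/2$ is attained by some initial condition (the index is continuous in the initial direction and invariant under the antipodal shift, so the attained values form the interval $I(\lambda)$), and the index of $(h(0),h'(0))$ equals $k/2$ iff $\beta(\pi)-\beta(0)=k\pi$, i.e.\ iff $\bigl(h(0),\tfrac{h'}{[q,q']}(0)\bigr)$ is an eigenvector of $A(\lambda)$ whose eigenvalue has sign $(-1)^k$. On this part you and the paper coincide.

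The obstacle you flag at the end is genuine, and the paper does not overcome it: its proof stops after the equivalence and never argues the membership $A(\lambda)\in\mathcal{M}^{1}$ or $\mathcal{M}^{-1}$. Read literally, that final claim is false, for precisely the reason you give: when $\lambda$ lies in the interior of an instability interval $(\lambda_k^1,\lambda_k^2)$, the monodromy $A(\lambda)$ is hyperbolic --- as Theorem \ref{thm:SturmLiouville} itself asserts --- and its two real eigendirections have index $k/2$ (their displacements are multiples of $\pi$ of the same parity and, by your width bound $I_{max}-I_{min}<1/2$, differ by less than $\pi$, hence coincide), so $k/2\in I(\lambda)$ while $A(\lambda)\in\mathcal{M}^{+}$, not $\mathcal{M}^{1}$. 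The statement that is true, and all that the proof of Theorem \ref{thm:SturmLiouville} needs, is: an eigenvector of index $k/2$ gives $A(\lambda)\in\mathcal{M}^{+}\cup\mathcal{M}^{1}$ ($k$ even) or $\mathcal{M}^{-}\cup\mathcal{M}^{-1}$ ($k$ odd), with the parabolic alternative occurring exactly when $k/2$ is an endpoint of $I(\lambda)$, i.e.\ at $\lambda=\lambda_k^1,\lambda_k^2$. You already have the tools to prove this corrected version, run contrapositively: if $A(\lambda)$ is hyperbolic, its action on directions has alternating attracting and repelling fixed directions, so the displacement $\beta(\pi)-\beta(0)$ exceeds $k\pi$ on one side of a fixed direction and falls below it on the other, placing $k/2$ in the \emph{interior} of $I(\lambda)$; hence if $k/2=I_{min}(\lambda)$ or $k/2=I_{max}(\lambda)$, then $A(\lambda)$, having a real eigenvalue of sign $(-1)^k$ and determinant $1$ (Lemma \ref{lem:Det1}), cannot be hyperbolic and must have both eigenvalues equal to $(-1)^k$. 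Note only that your interim sentence ``the eigendirection realising index $k/2$ sits at an extreme value of the index function'' is valid exactly in this band-edge case, not in general. In short: your proposal covers everything the paper actually proves, by the same method; the step you could not complete is an overstatement in the lemma itself (it is even inconsistent with the hyperbolicity claimed in Theorem \ref{thm:SturmLiouville} on the open intervals), and the endpoint-versus-interior dichotomy you identify is the correct repair.
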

\begin{proof}
First observe that $k/2\in I(\lambda)$ if and only if, for some $(h(0),h'(0))$ the index is $k/2$, which is equivalent to $(h(0),h'(0))$ being an eigenvector of $A(\lambda)$ of index $k/2$. 
\end{proof}

\begin{proof} (of Theorem \ref{thm:SturmLiouville}).
From Lemma \ref{lemma:ICrescent}, $I_{min}(\lambda)$ and $I_{max}(\lambda)$ are strictly increasing functions of $\lambda$. Define $\lambda_k^1$ and $\lambda_k^2$ as the minimum and maximum 
values of $\lambda$ such that $k\in I(\lambda)$. From Lemma \ref{lemma:Equivalences}, $A(\lambda)\in\mathcal{M}^1$, for  $\lambda\in [\lambda_k^1,\lambda_k^2]$, $k$ even,  and  $A(\lambda)\in\mathcal{M}^{-1}$, for  $\lambda\in [\lambda_k^1,\lambda_k^2]$, $k$ odd. Since any path from $\mathcal{M}^{1}$ to $\mathcal{M}^{-1}$ must necessarily pass through $\mathcal{M}^{0}$, we conclude 
that $\lambda_{k-1}^2<\lambda_k^1$ and $\lambda_k^2<\lambda_{k+1}^1$, for any $k\in\N$. 

Denote by $h_k^i$ the $\lambda_k^i$-eigenvectors. For $k$ odd, since equation \eqref{eq:DiffEq} is linear and $\pi$-periodic, the extension $h_k^i(\theta+\pi)=-h_k^i(\theta)$ to 
$[0,2\pi]$ is periodic of index $k$. For $k$ even, the extension $h_k^i(\theta+\pi)=h_k^i(\theta)$ is $\pi$-periodic and hence $2\pi$-periodic. 
The vectors $\left(h_k^i(0), (h_k^i)'(0)\right)$ are the unique eigenvectors of $A(\lambda,2\pi)$ with eigenvalue $1$ and index $k$. 
In fact, the same considerations as above show that for any $k$, there are exactly two values of $\lambda$ such that the linear transformation 
$A(\lambda, 2\pi)$ admits the eigenvalue $1$ with index $k$, and $\lambda=\lambda_k^i$ do this job. 
\end{proof}

\begin{figure}[htb]
 \centering
 \includegraphics[width=0.50\linewidth]{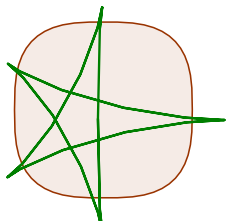}
 \caption{ A closed hypocycloid for the $\mathcal{L}_p$ space ($p=3$) with $\lambda_k^1=\lambda_k^2=27.1$ and $k=5$ (see proposition \ref{prop:DoubleEig}).  }
\label{fig:ClosedCyc}
\end{figure}

\begin{corollary}
For $\lambda\in(\lambda_k^1,\lambda_k^2)$, $k\in\N$, the $\mathcal{P}$-cycloid is unbounded. 
\end{corollary}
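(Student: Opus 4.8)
The plan is to upgrade the half-period monodromy $A(\lambda)=A(\lambda,\pi)$ to the full-period monodromy and exploit the hyperbolicity that Theorem \ref{thm:SturmLiouville} guarantees on the open interval. Since $p(\theta+\pi)=-p(\theta)$, the coefficients $[p,p']$ and $[q,q']$ of \eqref{eq:DiffEq} are $\pi$-periodic, so the full monodromy is $A(\lambda,2\pi)=A(\lambda)^2$. For $\lambda\in(\lambda_k^1,\lambda_k^2)$ the theorem puts $A(\lambda)$ in $\mathcal M^+$ ($k$ even) or $\mathcal M^-$ ($k$ odd), and being strictly inside the interval means $A(\lambda)\notin\mathcal M^1\cup\mathcal M^{-1}$; in both cases $A(\lambda)^2$ lies in $\mathcal M^+$ with real eigenvalues $\mu^2,\mu^{-2}$ where $\mu>1$. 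Thus the full monodromy is hyperbolic and admits no eigenvalue $1$, so there is no nonzero $2\pi$-periodic solution of \eqref{eq:DiffEq} and the cycloid cannot be closed. The point of the proof is to turn this into genuine unboundedness.

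The mechanism I would use is a self-homothety of the curve. Writing $V(\theta)=\bigl(h(\theta),\tfrac{h'}{[q,q']}(\theta)\bigr)$ for the phase vector of the support function, the $2\pi$-periodicity of the coefficients gives $V(\theta+2\pi)=A(\lambda,\theta)A(\lambda,2\pi)V(0)$. Hence if $V(0)=e_{\pm}$ is an eigenvector of $A(\lambda,2\pi)$ for $\mu^{\pm2}$, then $V_\pm(\theta+2\pi)=\mu^{\pm2}V_\pm(\theta)$ for all $\theta$. Since $p$ and $q$ are $2\pi$-periodic, formula \eqref{eq:Gamma}, $\gamma=hp+\tfrac{h'}{[q,q']}q$, then yields $\gamma_\pm(\theta+2\pi)=\mu^{\pm2}\gamma_\pm(\theta)$: each eigensolution produces a cycloid invariant under a homothety of ratio $\mu^{\pm2}$ per period. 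Iterating, $\gamma_+(\theta+2n\pi)=\mu^{2n}\gamma_+(\theta)$ blows up as $n\to+\infty$ and $\gamma_-(\theta+2n\pi)=\mu^{-2n}\gamma_-(\theta)$ blows up as $n\to-\infty$. Because $e_+,e_-$ form a basis, a general cycloid is $\gamma=c_+\gamma_++c_-\gamma_-$ with $(c_+,c_-)\neq0$; taking $n\to+\infty$ when $c_+\neq0$ and $n\to-\infty$ when $c_+=0$ exhibits unbounded growth in one of the two directions.

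Two routine points need checking. First, to rule out pointwise cancellation I note that $\gamma_\pm(\theta)=0$ forces $V_\pm(\theta)=0$ (as $[p,q]=1$ makes $p,q$ independent), but $V_\pm(\theta)=A(\lambda,\theta)e_\pm\neq0$ since $\det A(\lambda,\theta)=1$ by Lemma \ref{lem:Det1}; hence $\gamma_\pm$ never vanishes, and the estimate $|\gamma(\theta+2n\pi)|\ge|c_+|\mu^{2n}|\gamma_+(\theta)|-|c_-|\mu^{-2n}|\gamma_-(\theta)|$ is genuinely unbounded. Second, since the theorem concerns $k\ge2$ the interval is nonempty only for $\lambda_k^1>1$, so $\lambda\neq1$ throughout and the support-function description remains valid (we stay away from the degenerate $\lambda=1$ case).

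The main obstacle is conceptual rather than computational: the naive conclusion that ``$r$ is not $2\pi$-periodic, hence $\gamma$ is not closed'' does not by itself give unboundedness, since along the contracting eigendirection the solution decays as $\theta\to+\infty$ and one might fear a bounded spiral. The resolution is the exact self-homothety $\gamma(\theta+2\pi)=\mu^{2}\gamma(\theta)$ combined with the freedom to let $\theta\to-\infty$; this converts hyperbolicity of the monodromy directly into exponential growth of $|\gamma|$ in at least one time direction.
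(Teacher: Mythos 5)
Your proof is correct and follows essentially the same route as the paper: the paper's own (very terse) argument is exactly that for $\lambda$ strictly inside $(\lambda_k^1,\lambda_k^2)$ the monodromy $A(\lambda,\pi)$ has a real eigenvalue of absolute value greater than $1$, so its powers $A(\lambda,N\pi)=A(\lambda,\pi)^N$ blow up. Your write-up merely makes explicit what the paper leaves implicit --- the self-homothety $\gamma_{\pm}(\theta+2\pi)=\mu^{\pm2}\gamma_{\pm}(\theta)$ along eigendirections, the non-vanishing of $\gamma_{\pm}$ via $[p,q]=1$ and $\det A(\lambda,\theta)=1$, and the need to let $\theta\to-\infty$ when the expanding coefficient vanishes --- all of which are sound.
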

\begin{proof}
For $\lambda\in(\lambda_k^1,\lambda_k^2)$, $A(\lambda,\pi)$ has a real eigenvalue of absolute value bigger that $1$. Since $A(\lambda,N\pi)=A(\lambda,\pi)^N$, the corollary is proved.
\end{proof}

\begin{figure}[htb]
 \centering
 \includegraphics[width=0.50\linewidth]{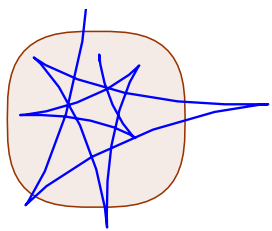}
 \caption{ An unbounded hypocycloid for the $\mathcal{L}_p$ space ($p=3$) with $\lambda=19.79$. }
\label{fig:UnboundedCyc}
\end{figure}

\subsection{Closing after $N$ turns}

We now consider curves in $\mathcal{H}$ that close after $N$ turns. Such curves are called $N$-hedgehogs (\cite{Martinez-Maure}).
We look for $N$-hedgehogs that are hypocycloids and epicycloids, i.e., that satisfy equation \eqref{eq:DiffEq} for some $\lambda>1$ or $\lambda<1$,
respectively.   

\begin{thm} Fix $N\in\N$, $N>1$. 
\begin{enumerate}
\item There exist $0<\lambda_{1,N}<...<\lambda_{{N-1},N}<1$ double eigenvalues of the $2\pi N$-periodic problem associated to equation \eqref{eq:DiffEq}. 
Each eigenvalue $\lambda_{k,N}$ determines a pair of linearly independent epicycloids with $2k$ cusps.  
\item For $k>N$, $(k,N)$ coprime, there exists a double eigenvalue $\lambda_{k,N}$  of the $2\pi N$-periodic problem associated to equation \eqref{eq:DiffEq}. 
Each eigenvalue $\lambda_{k,N}$ determines a pair of linearly independent hypocycloids with $2k$ cusps.  
\item For $k>N$, $\frac{k}{N}=\frac{k_1}{N_1}$, $(k_1,N_1)$ coprime, there exists a pair of linearly independent hypocycloids with $2k$ cusps 
obtained by traversing $\frac{N}{N_1}$ times the $2\pi N_1$-periodic  hypocycloids with $2k_1$ cusps.  
\end{enumerate}
\end{thm}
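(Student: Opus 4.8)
The plan is to translate the condition that a solution close after $N$ turns into an algebraic condition on the monodromy matrix $A(\lambda)=A(\lambda,\pi)$, and then to locate the admissible values of $\lambda$ by means of the monotone rotation number furnished by Lemma \ref{lemma:ICrescent}. First I would note that the coefficients $[p,p']$ and $[q,q']$ of equation \eqref{eq:DiffEq} are $\pi$-periodic, so that $A(\lambda,2\pi N)=A(\lambda)^{2N}$. Hence a nontrivial $2\pi N$-periodic solution exists exactly when $A(\lambda)^{2N}$ has $1$ as an eigenvalue; for $\lambda\neq 1$ the function $h=r/(1-\lambda)$ is then $2\pi N$-periodic, and equation \eqref{eq:Gamma} shows that the corresponding curve $\gamma$ is closed. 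The dimension of the space of $2\pi N$-periodic solutions equals the dimension of the fixed subspace of $A(\lambda)^{2N}$, so a \emph{double} eigenvalue corresponds precisely to $A(\lambda)^{2N}=\mathrm{Id}$.

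Next I would run the case analysis dictated by the decomposition $\mathcal{M}=\mathcal{M}^{+}\cup\mathcal{M}^{-}\cup\mathcal{M}^{0}\cup\mathcal{M}^{1}\cup\mathcal{M}^{-1}$. If $A(\lambda)$ is hyperbolic (in $\mathcal{M}^{+}$ or $\mathcal{M}^{-}$) then $A(\lambda)^{2N}$ has real eigenvalues of modulus different from $1$, so there is no periodic solution. If $A(\lambda)\in\mathcal{M}^{1}\cup\mathcal{M}^{-1}$ but $A(\lambda)\neq\pm\mathrm{Id}$, then $A(\lambda)^{2N}$ is a nontrivial shear with eigenvalue $1$, whose fixed subspace is one-dimensional; these are exactly the $2\pi$-periodic ($1$-turn) eigenvectors $h_k^i$ already produced in Theorem \ref{thm:SturmLiouville}, occurring when $N\mid k$. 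The genuinely new solutions come from the elliptic case $A(\lambda)\in\mathcal{M}^{0}$, where $A(\lambda)$ is conjugate to a rotation $R_{\phi(\lambda)}$ with $\mathrm{tr}\,A(\lambda)=2\cos\phi(\lambda)$; then $A(\lambda)^{2N}$ is conjugate to $R_{2N\phi(\lambda)}$, and the presence of the eigenvalue $1$ forces $2N\phi(\lambda)\in 2\pi\mathbb{Z}$, in which case $A(\lambda)^{2N}=\mathrm{Id}$ exactly and the whole plane is fixed. This yields a double eigenvalue with a two-dimensional solution space, hence a pair of linearly independent cycloids.

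To turn $2N\phi(\lambda)\in2\pi\mathbb{Z}$ into an explicit list I would use the rotation number $\rho(\lambda)$, defined as the total variation of $\beta$ over $[0,\pi]$ (the lift of $\phi(\lambda)/2\pi$ accumulated across the elliptic intervals). By Lemma \ref{lemma:ICrescent} the function $\rho$ is continuous and strictly increasing, with $\rho(\lambda_k^i)=k/2$ at the boundary values where $A\in\mathcal{M}^{1}\cup\mathcal{M}^{-1}$, and $\rho(\lambda)\notin\tfrac12\mathbb{Z}$ exactly on the elliptic intervals. The condition $A(\lambda)^{2N}=\mathrm{Id}$ then reads $\rho(\lambda)=\tfrac{k}{2N}$ with $k/N\notin\mathbb{Z}$. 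For the cusp count, the total variation over the full period $2\pi N$ equals $2N\rho(\lambda)=k$, so $r$ (being proportional to $h$) has $2k$ zeros there; by the Proposition on cusps and orientation the curve has exactly $2k$ ordinary cusps, with positive orientation (epicycloid) when $\lambda<1$ and negative orientation (hypocycloid) when $\lambda>1$.

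Finally I would read off the three items from the behaviour of $\rho$. On $(0,1)=(\lambda_0,\lambda_1^1)$ the map $\rho$ increases from $0$ to $\tfrac12$, so $\rho(\lambda)=\tfrac{k}{2N}$ is solvable, uniquely, exactly for $k=1,\dots,N-1$, giving the ordered epicycloid eigenvalues $\lambda_{1,N}<\dots<\lambda_{N-1,N}$ of item (1). For $\lambda>1$ one has $\rho>\tfrac12$, so $\rho(\lambda)=\tfrac{k}{2N}$ requires $k>N$; when $(k,N)=1$ the resulting $\lambda_{k,N}$ has minimal period $2\pi N$ and produces a genuine pair of $N$-turn hypocycloids with $2k$ cusps, which is item (2). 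When instead $\gcd(k,N)=d>1$, writing $k=dk_1$, $N=dN_1$ gives $\tfrac{k}{2N}=\tfrac{k_1}{2N_1}$, so $\lambda_{k,N}=\lambda_{k_1,N_1}$ is already a $2\pi N_1$-periodic eigenvalue; traversing its $2k_1$-cusped hypocycloid $N/N_1$ times produces the $2k$-cusped $N$-turn curve of item (3). The step requiring the most care, and the main obstacle, is the bookkeeping of the rotation number: one must verify that throughout the elliptic region $A(\lambda)^{2N}$ is the exact identity rather than merely a matrix with eigenvalue $1$, so that the eigenvalue is truly double, and one must correctly match the lifted value $\rho(\lambda)=\tfrac{k}{2N}$ both to the cusp count $2k$ and, via $\gcd(k,N)$, to the dichotomy between primitive and multiply covered curves.
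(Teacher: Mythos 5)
Your proposal is correct and takes essentially the same route as the paper: the paper likewise reduces $2\pi N$-periodicity to the half-period monodromy $A(\lambda)$ being elliptic with the full $N$-turn monodromy equal to the identity, and locates the admissible values of $\lambda$ via the monotone rotation argument of Lemma \ref{lemma:ICrescent}, yielding the $N-1$ epicycloid eigenvalues in $(0,1)$, the coprime case for $\lambda>1$, and the multiply-covered case by reduction of $k/N$. Your write-up simply makes explicit several steps the paper compresses (the shear case, the fact that an elliptic $A(\lambda)$ with eigenvalue $1$ in $A(\lambda)^{2N}$ forces $A(\lambda)^{2N}=\mathrm{Id}$ exactly, and the cusp count via the total variation), which the paper's proof leaves implicit.
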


\begin{proof}
To prove item 1, consider $\lambda$ in the interval $(0,1)$. Since $A(\lambda)\in{\mathcal M}^0$, there exist exactly $N-1$ values of $\lambda$
such that $A(\lambda)^N=Id$. We denote them $\lambda_{k,N}$, $k=1,..,N-1$. Each $\lambda_{k,N}$ is a double eigenvalue of the $2\pi N$-periodic problem associated to equation \eqref{eq:DiffEq} and determines a pair of linearly independent closed epicycloids with $2k$ cusps.  
The proof of item 2 is similar to the proof of item 1. Item 3 is obvious.
\end{proof}

\subsection{Symmetry and double eigenvalues}

\begin{Proposition}\label{prop:DoubleEig}
Assume that the unit ball of $\mathcal{P}$ is invariant under some linear transformation $B:\R^2\to\R^2$ such that $B^2=-I$. Then 
$\lambda_k^1=\lambda_k^2$, for any $k$ odd. 
\end{Proposition}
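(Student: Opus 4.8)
The plan is to produce a real linear symmetry of the solution space of \eqref{eq:DiffEq} whose square is the half-period monodromy $A(\lambda)$, and then to observe that a real square root of a matrix in $\mathcal{M}^{-}$ cannot exist. First I would note that $B^2=-I$ forces $\det B=1$: in dimension two $\det(B)^2=\det(B^2)=1$, and the Cayley--Hamilton relation $B^2-\mathrm{tr}(B)B+\det(B)I=0$ combined with $B^2=-I$ rules out $\mathrm{tr}(B)\neq 0$ (otherwise $B$ would be scalar, contradicting $B^2=-I$), leaving $\mathrm{tr}(B)=0$ and $\det B=1$. Hence $B$ is orientation preserving and carries the unit circle onto itself, so there is a smooth increasing reparameterization $\phi$ with $Bp(\theta)=p(\phi(\theta))$ and $\phi(\theta+2\pi)=\phi(\theta)+2\pi$; from $B^2=-I$ and $p(\theta+\pi)=-p(\theta)$ one then reads off $\phi(\phi(\theta))=\theta+\pi$.

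Next I would build the symmetry. Writing $\tilde p=Bp$ and $\tilde q=Bq$, the condition $\det B=1$ gives $[\tilde p,\tilde p']=\det(B)[p,p']=[p,p']$ and $[\tilde q,\tilde q']=[q,q']$. Since moreover $\tilde p(\theta)=p(\phi(\theta))$ and $\tilde q(\theta)=q(\phi(\theta))$, the equation \eqref{eq:DiffEq} written for the reparameterization $\phi$ has exactly the same coefficient functions $[p,p']$ and $[q,q']$ as the original one. By the lemma on invariance of \eqref{eq:DiffEq} under reparameterization of $q$, it follows that $Su:=u\circ\phi$ sends every solution of \eqref{eq:DiffEq} for a fixed $\lambda$ to another solution for the same $\lambda$. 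Thus $S$ is a real linear automorphism of the two-dimensional solution space, and $(S^2u)(\theta)=u(\theta+\pi)$ is the shift by $\pi$, whose matrix in the basis of initial conditions is precisely $A(\lambda)$. In other words, for every $\lambda$ there is a real matrix $\hat S$ with $\hat S^{2}=A(\lambda)$.

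I would then fix $k$ odd and suppose, for contradiction, that $\lambda_k^1<\lambda_k^2$. For any $\lambda\in(\lambda_k^1,\lambda_k^2)$, Theorem \ref{thm:SturmLiouville} gives $A(\lambda)\in\mathcal{M}^{-}$, so $A(\lambda)$ has two distinct real negative eigenvalues $\mu$ and $\mu^{-1}$ (their product is $\det A(\lambda)=1$ by Lemma \ref{lem:Det1}) with $|\mu|\neq 1$. Such a matrix has no real square root: the eigenvalues of $\hat S$ square to the negative numbers $\mu,\mu^{-1}$ and are therefore non-real, so they must form a complex conjugate pair of equal modulus; but then their squares $\mu$ and $\mu^{-1}$ would also have equal modulus, contradicting $|\mu|\neq|\mu^{-1}|$. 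This contradicts $\hat S^{2}=A(\lambda)$, so $(\lambda_k^1,\lambda_k^2)=\emptyset$, that is, $\lambda_k^1=\lambda_k^2$.

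The main obstacle I anticipate is the verification that the $\phi$-reparameterization carries \eqref{eq:DiffEq} back to itself rather than to a different Sturm--Liouville problem; this is exactly where $\det B=1$ enters, through the invariance of the determinant brackets $[p,p']$ and $[q,q']$, and it is what makes $S$ a genuine symmetry of a single equation. Once this is established, the remaining linear-algebra step—that a matrix in $\mathcal{M}^{-}$ admits no real square root—is elementary, and the restriction to $k$ odd is what places $A(\lambda)$ in $\mathcal{M}^{-}$ on the open interval $(\lambda_k^1,\lambda_k^2)$.
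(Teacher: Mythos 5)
Your proof is correct, but it takes a genuinely different route from the paper's. The paper normalizes the symmetry: choosing a basis in which $B$ is the rotation by $\pi/2$, the coefficients $[p,p']$ and $[q,q']$ become $\pi/2$-periodic, so the half-period monodromy factors as $A(\lambda,\pi)=A(\lambda,\pi/2)^2$; it then reruns the oscillation analysis of Theorem \ref{thm:SturmLiouville} at the quarter period and locates, in each gap between quarter-period bands (where $A(\lambda,\pi/2)\in\mathcal{M}^{0}$), a value $\lambda_k$ with $A(\lambda_k,\pi/2)^2=-Id$, hence $A(\lambda_k,\pi)=-Id$ and $\lambda_k^1=\lambda_k^2=\lambda_k$. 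You instead construct the square root invariantly --- your $\hat S$ is precisely the paper's quarter-period matrix $A(\lambda,\pi/2)$ seen in its normalized coordinates --- and, rather than exhibiting the double eigenvalue, you rule out the open interval by the obstruction that a matrix in $\mathcal{M}^{-}$ with distinct negative eigenvalues admits no real square root. Each approach buys something: yours is shorter and coordinate-free, and your explicit verification that $\det B=1$ forces $[p\circ\phi,(p\circ\phi)']=[p,p']$ and $[q\circ\phi,(q\circ\phi)']=[q,q']$ actually supplies the invariance that the paper's ``adequate basis'' step leaves implicit; the paper's version, on the other hand, is constructive, pinning down where the double eigenvalue sits and showing in addition that the monodromy there is exactly $-Id$ (which your symmetry also yields, since at the double eigenvalue $\hat S$ has eigenvalues $\pm i$, is diagonalizable over $\mathbb{C}$, and so squares to $-Id$). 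One small point to patch: $\phi(\phi(\theta))=\theta+\pi$ is read off from $B^2p=-p$ only modulo $2\pi$; the exact lift gives $\phi\circ\phi(\theta)=\theta+(2m+1)\pi$ for some fixed $m\in\Z$, whence $\hat S^2=A(\lambda)^{2m+1}$ rather than $A(\lambda)$ itself. This is harmless: either normalize $m=0$ by replacing $B$ with $-B$ (which shifts $m$ by one, while $\pm B$ preserve the same unit ball), or simply note that any odd power of a matrix in $\mathcal{M}^{-}$ lies again in $\mathcal{M}^{-}$, so your square-root contradiction goes through unchanged.
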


\begin{proof}
By choosing an adequate basis of $\R^2$, we may assume that $B$ is a $\pi/2$ rotation, which implies that $[p,p']$ and $[q,q']$ are $\pi/2$-periodic, and so does equation \eqref{eq:DiffEq}.
We conclude that $A(\lambda,\pi)=A(\lambda,\pi/2)^2$. 

As in the proof of Theorem \ref{thm:SturmLiouville}, choose $\mu_k^i$ such that $A(\mu_k^i,\pi/2)\in\mathcal{M}^{-1}$, $k$ odd, and $A(\mu_k^i,\pi/2)\in\mathcal{M}^{1}$, $k$ even.
Take $\lambda_k\in(\mu_{k-1}^2,\mu_k^1)$ such that $A(\lambda,\pi/2)^2=-Id$.  We conclude that 
$A(\lambda_k,\pi)=-Id$. Thus $\lambda_k^1=\lambda_k^2=\lambda_k$. 
\end{proof}

The above result says that double eigenvalues appear in the presence of symmetry. The following conjecture is a kind of converse:

\medskip\noindent
{\bf Conjecture:} Assume that  $\lambda_k^1=\lambda_k^2=k^2$, for all $k\in\N$. Then the unit ball is Euclidean. 

\begin{remark}
It is a famous conjecture whether or not the eigenvalues of the Laplacian of a planar region determine the region itself, up to rigid motions (\cite{Kac}).
The above conjecture for convex curves has a similar flavour.
\end{remark}

\section{ An orthonormal basis for ${\mathcal C}^0(S^1)$}

\subsection{The kernel of the operator $T$}

Consider ${\mathcal C}^0(S^1)$ the space of $h:S^1\to\R$ with the inner product
$$
\left< h_1,h_2\right>=\int_0^{2\pi} h_1(\theta)h_2(\theta)[p,p']d\theta.
$$
\begin{lemma}
The linear mapping 
$$
Th=-\frac{1}{[p,p']}\left(  \frac{h'}{[q,q']} \right)'
$$
is self-adjoint with respect to this inner product.
\end{lemma}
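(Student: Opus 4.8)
The plan is to verify self-adjointness directly by integration by parts, exploiting the periodicity of all functions involved. I would start from the definition and compute $\left<Th_1,h_2\right>$ by writing out the integral
\begin{equation*}
\left<Th_1,h_2\right>=\int_0^{2\pi} \left(-\frac{1}{[p,p']}\left(\frac{h_1'}{[q,q']}\right)'\right)h_2\,[p,p']\,d\theta = -\int_0^{2\pi}\left(\frac{h_1'}{[q,q']}\right)' h_2\,d\theta,
\end{equation*}
where the weight $[p,p']$ from the inner product cancels against the factor $\frac{1}{[p,p']}$ built into $T$. This cancellation is the structural reason the operator is self-adjoint with respect to this particular weighted inner product, and it is worth pointing out explicitly.

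Next I would integrate by parts once, moving the derivative off the bracketed term and onto $h_2$. The boundary term is $\left[-\frac{h_1'}{[q,q']}h_2\right]_0^{2\pi}$, which vanishes because every factor is $2\pi$-periodic (the curves lie in $\H$ and $p,q$ are $2\pi$-periodic parameterizations, while $h_1,h_2\in{\mathcal C}^0(S^1)$). This leaves
\begin{equation*}
\left<Th_1,h_2\right>=\int_0^{2\pi}\frac{h_1'}{[q,q']}\,h_2'\,d\theta = \int_0^{2\pi}\frac{h_1'\,h_2'}{[q,q']}\,d\theta.
\end{equation*}
The key observation is that this final expression is manifestly symmetric in $h_1$ and $h_2$: interchanging them leaves the integrand unchanged. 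Running the identical computation for $\left<h_1,Th_2\right>$ produces the same symmetric integral, so $\left<Th_1,h_2\right>=\left<h_1,Th_2\right>$, which is exactly the claim.

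I do not anticipate a genuine obstacle here, since this is the standard Lagrange-identity argument for a Sturm–Liouville operator in self-adjoint (divergence) form, and equation \eqref{eq:DiffEq} is already known from the excerpt to be of Sturm–Liouville type with $a=\frac{1}{[q,q']}$ and $r=[p,p']$. The only point requiring a modicum of care is the justification that the boundary terms vanish: one must ensure that $h_1'$ and $h_2'$ are themselves $2\pi$-periodic and that the parameterizations $p$, $q$ extend periodically, which follows from $p(\theta+\pi)=-p(\theta)$ together with \eqref{eq:ParameterQ}. Assuming sufficient smoothness of the functions in the relevant domain of $T$ (so that the integration by parts is valid), the argument is complete.
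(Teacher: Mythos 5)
Your proof is correct and follows essentially the same route as the paper: cancel the weight $[p,p']$ against the $\frac{1}{[p,p']}$ in $T$, integrate by parts once (boundary terms vanish by $2\pi$-periodicity), and observe that the resulting integral $\int_0^{2\pi}\frac{h_1'h_2'}{[q,q']}\,d\theta$ is symmetric in $h_1$ and $h_2$. Your explicit justification of the vanishing boundary term is a detail the paper leaves implicit, but the argument is identical in substance.
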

\begin{proof}
Observe that
$$
\left< h_1,Th_2\right>=\int_0^{2\pi} h_1Th_2[p,p']d\theta=-\int_0^{2\pi} h_1\left(\frac{h_2'}{[q,q']}\right)'d\theta=\int_0^{2\pi} \frac{h_1'h_2'}{[q,q']}d\theta
$$
which is equal to $\left< Th_1,h_2\right>$.
\end{proof}

The kernel $K$ of $T$ is the $1$-dimensional subspace of constant functions. Denote $L_0=K^{\perp}$ and observe that, for $h\in L_0$,
\begin{equation}\label{eq:ZeroDualLength}
\int_0^{2\pi} h(\theta)[p,p'](\theta)d\theta=0.
\end{equation}
Denote by $S:L_0\to {\mathcal C}^0(S^1)$ the inverse of $T$.

\subsection{Compactness of the inverse operator}

\begin{lemma}
For $h\in L_0$, let $g=Sh$. Then 
$$
||g||_{\infty}\leq ||[q,q']||_2 ||[p,p']||_2 ||h||_2;\ \  ||g'||_{\infty}\leq ||[q,q']||_{\infty} ||[p,p']||_2 ||h||_2.
$$
\end{lemma}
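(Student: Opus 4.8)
The plan is to read the identity $g=Sh$ as the second-order equation it encodes and to extract both estimates by locating zeros and applying the Cauchy--Schwarz inequality. By the definition of $T$, $Tg=h$ means
\[
\left( \frac{g'}{[q,q']} \right)' = -[p,p']\,h .
\]
First I would abbreviate $w=g'/[q,q']$, so that $w'=-[p,p']\,h$. Two elementary observations drive everything. Since $h\in L_0$, equation \eqref{eq:ZeroDualLength} gives $\int_0^{2\pi}[p,p']\,h\,d\theta=0$, i.e. $\int_0^{2\pi}w'\,d\theta=0$; this is exactly the compatibility condition that makes $w$ (equivalently $g'$) $2\pi$-periodic. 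Moreover, because $g$ is periodic, $g'$ — and hence $w$ — must vanish at some $\theta_1$, and because $g=Sh\in L_0$, equation \eqref{eq:ZeroDualLength} applied to $g$ together with $[p,p']>0$ forces $g$ itself to vanish at some $\theta_2$ (a function of one sign cannot be $[p,p']$-orthogonal to the constants).

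For the bound on $\|g'\|_\infty$ I would integrate $w'=-[p,p']h$ from the zero $\theta_1$, obtaining $w(\theta)=-\int_{\theta_1}^{\theta}[p,p']\,h\,ds$. Estimating this by the integral over one full period and using Cauchy--Schwarz yields
\[
|w(\theta)|\le \int_0^{2\pi}[p,p']\,|h|\,d\theta \le \|[p,p']\|_2\,\|h\|_2 .
\]
Since $[q,q']>0$, multiplying back gives $|g'(\theta)|=[q,q'](\theta)\,|w(\theta)|\le \|[q,q']\|_\infty\,\|[p,p']\|_2\,\|h\|_2$, which is the second inequality.

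For the bound on $\|g\|_\infty$ I would integrate $g'$ from the zero $\theta_2$ of $g$ and control the increment by the full-period integral, then apply Cauchy--Schwarz a second time:
\[
|g(\theta)|\le \int_0^{2\pi}|g'|\,d\theta=\int_0^{2\pi}[q,q']\,|w|\,d\theta\le \|[q,q']\|_2\,\|w\|_2 .
\]
Feeding in the pointwise control of $w$ from the previous paragraph produces the advertised product $\|[q,q']\|_2\,\|[p,p']\|_2\,\|h\|_2$. The genuinely delicate points here are organizational rather than analytic: one must first fix the correct representative $g$ (the one lying in $L_0$, so that $S$ is single-valued and $\|g\|_\infty$ is even meaningful), and then justify the two zero-existence claims, since each estimate rests on integrating from a zero — of $w$ in the first case, of $g$ in the second. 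After that the product-of-norms shape is simply the outcome of chaining the two Cauchy--Schwarz steps; the precise constant $1$ in front corresponds to normalizing arclength on $S^1$, and any harmless absolute factor coming from the interval length is immaterial for the compactness argument of the next subsection, where only the form of the bound matters.
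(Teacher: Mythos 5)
Your proof is correct and takes essentially the same route as the paper: the paper simply defines $f(t)=\int_0^t h\,[p,p']\,dt$ and $g(t)=\int_0^t f\,[q,q']\,dt$ (your $w$ is its $f$, up to sign) and chains the same two Cauchy--Schwarz estimates, so the two arguments coincide in substance. If anything, your bookkeeping is more careful than the paper's, which silently asserts $f,g\in L_0$ rather than integrating from zeros of $w$ and $g$, and which uses $\|f\|_2\le\|f\|_\infty$ with the same implicit normalization of the measure on $[0,2\pi]$ that you flag as a harmless interval-length factor.
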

\begin{proof}
Write
$$
f(t)=\int_0^t h(t)[p,p'](t)dt,\ \ g(t)=\int_0^t f(t)[q,q'](t)dt,
$$
where $f,g \in L_0$. Then
$$
||g'||_{\infty}=||f[q,q']||_{\infty}\leq ||f||_{\infty} ||[q,q']||_{\infty}\leq ||[q,q']||_{\infty} ||[p,p']||_{2} ||h||_2
$$
and
$$
||g||_{\infty}\leq ||[q,q']||_2 ||f||_2\leq ||[q,q']||_2 ||f||_{\infty}\leq ||[q,q']||_2 ||[p,p']||_2 ||h||_2.
$$
\end{proof}

\begin{Proposition}
$S:(L_0,||\cdot||_2)\to (L_0,||\cdot||_2)$ is a compact operator. 
\end{Proposition}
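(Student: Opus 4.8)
The plan is to derive compactness from the Arzel\`a--Ascoli theorem, using directly the two a priori estimates of the preceding lemma. First I would take an arbitrary bounded sequence $\{h_n\}$ in $(L_0,||\cdot||_2)$, say $||h_n||_2\leq M$ for all $n$, and set $g_n=Sh_n$. Since $[p,p']$ is continuous and bounded between two positive constants on the compact circle $S^1$, the weighted norm $||\cdot||_2$ coming from the inner product is equivalent to the standard $L^2$-norm, so the bound $||h_n||_2\leq M$ controls the quantity appearing on the right-hand side of the lemma's inequalities. Applying that lemma gives
$$
||g_n||_{\infty}\leq ||[q,q']||_2\, ||[p,p']||_2\, M,\qquad ||g_n'||_{\infty}\leq ||[q,q']||_{\infty}\, ||[p,p']||_2\, M,
$$
so the family $\{g_n\}$ is uniformly bounded, and the uniform bound on $g_n'$ makes each $g_n$ Lipschitz with a common constant. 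Hence $\{g_n\}$ is uniformly bounded and uniformly equicontinuous on $S^1$.

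Next I would invoke Arzel\`a--Ascoli to extract a subsequence $\{g_{n_j}\}$ converging uniformly to some $g\in{\mathcal C}^0(S^1)$. Writing $C=\max_{S^1}[p,p']$, uniform convergence then upgrades to convergence in the weighted norm, because
$$
||g_{n_j}-g||_2^2=\int_0^{2\pi}(g_{n_j}-g)^2[p,p']\,d\theta\leq 2\pi C\,||g_{n_j}-g||_{\infty}^2\longrightarrow 0.
$$
Finally, each $g_n$ lies in the closed subspace $L_0=K^\perp$ (by the construction in the preceding lemma, $Sh\in L_0$ whenever $h\in L_0$), so the $||\cdot||_2$-limit $g$ remains in $L_0$. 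Thus every bounded sequence in $(L_0,||\cdot||_2)$ is mapped by $S$ to a sequence with a $||\cdot||_2$-convergent subsequence, which is precisely the assertion that $S$ is compact.

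I do not expect a serious obstacle here: the two estimates already encode the smoothing effect of $S$, and the argument is the standard one. The only points requiring care are (i) translating the uniform bound on $g_n'$ into equicontinuity, which is immediate from the fundamental theorem of calculus; (ii) passing from uniform convergence to convergence in the weighted $L^2$-norm, which is legitimate exactly because the weight $[p,p']$ is bounded; and (iii) the harmless bookkeeping that the weighted and unweighted $L^2$-norms are equivalent, needed to feed $||h_n||_2\leq M$ into the lemma. Closedness of $L_0$ guarantees the limit stays in the domain, completing the verification.
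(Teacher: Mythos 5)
Your proposal is correct and follows essentially the same route as the paper: apply the preceding lemma's $||g||_{\infty}$ and $||g'||_{\infty}$ estimates to get uniform boundedness and equicontinuity, extract a uniformly convergent subsequence via Arzel\`a--Ascoli, and note that uniform convergence implies convergence in the weighted $||\cdot||_2$ norm. You merely spell out details the paper leaves implicit (Lipschitz equicontinuity, boundedness of the weight $[p,p']$, and the limit staying in $L_0$), which are all correct.
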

\begin{proof}
Let $\{h_n\}$ be a bounded sequence in $(L_0,||\cdot||_2)$. By the above lemma, $g_n=Sh_n$ is equicontinuous and uniformly bounded.
Thus we can find an uniformly convergent subsequence $\{g_{n_j}\}$. This subsequence is also convergent in the $||\cdot||_2$ norm.
\end{proof}

\begin{corollary}
The set $\{h_k^i\}$, $k\in\N$, $i=1,2$, form an orthonormal basis of ${\mathcal C}^0(S^1)$. 
\end{corollary}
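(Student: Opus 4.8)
The plan is to show that $\{h_k^i\}$ is a complete orthonormal system in the Hilbert space $L^2(S^1)$ with the weighted inner product $\langle h_1,h_2\rangle=\int_0^{2\pi}h_1h_2[p,p']\,d\theta$, and then pass from $L^2$-completeness to the claimed statement about $\mathcal{C}^0(S^1)$. The strategy rests on the fact already assembled in the excerpt: $T$ is self-adjoint, its kernel $K$ is the one-dimensional space of constants, and on $L_0=K^\perp$ the inverse operator $S$ is compact. These are exactly the hypotheses of the spectral theorem for compact self-adjoint operators.

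First I would verify that $S$ is self-adjoint on $L_0$. This is immediate from self-adjointness of $T$: for $h_1,h_2\in L_0$, writing $h_i=Tg_i$ with $g_i=Sh_i\in L_0$, we have $\langle Sh_1,h_2\rangle=\langle g_1,Tg_2\rangle=\langle Tg_1,g_2\rangle=\langle h_1,Sh_2\rangle$. Next I would apply the Hilbert--Schmidt spectral theorem to the compact self-adjoint operator $S$ on the Hilbert space $L_0$: its eigenvectors form an orthonormal basis of $L_0$, and the nonzero eigenvalues accumulate only at $0$. Since $T=S^{-1}$ on $L_0$, the eigenvectors of $S$ are precisely the eigenvectors of $T$ with strictly positive eigenvalues, and these are exactly the $2\pi$-periodic eigenvectors $h_k^i$ of equation \eqref{eq:DiffEq} with $\lambda=\lambda_k^i>0$ catalogued in Theorem \ref{thm:SturmLiouville}. (No eigenvector is lost: Lemma \ref{lemma:Eigenvalue0} and the subsequent lemmas rule out closed eigenvectors for $\lambda\le 0$ other than the constant, and the constant spans $K$.) Adjoining the constant eigenvector $h_0=1$ spanning $K$ to the orthonormal basis of $L_0$ then yields an orthonormal basis of all of $L^2(S^1)=K\oplus L_0$.

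The remaining point is to upgrade completeness in $L^2$ to the statement about $\mathcal{C}^0(S^1)$. For this I would argue that $\mathcal{C}^0(S^1)$ sits densely inside $L^2(S^1)$ with respect to the $\|\cdot\|_2$ norm (the weight $[p,p']$ is smooth and bounded above and below by positive constants, so the two norms are comparable on continuous functions and $\mathcal{C}^0$ is $L^2$-dense), so an orthonormal set complete in $L^2$ restricts to a complete orthonormal system for $\mathcal{C}^0(S^1)$ in the sense intended, namely that every continuous $h$ is the $L^2$-limit of its Fourier expansion $\sum_{k,i}\langle h,h_k^i\rangle h_k^i$. Orthonormality itself requires only rescaling each eigenvector to unit length, which is harmless.

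The main obstacle will be the bookkeeping that guarantees \emph{no eigenvector is missed} — that is, confirming that the eigenvectors produced by the compact-operator machinery coincide exactly with the geometrically indexed family $\{h_k^i\}$ and that together with the constant they exhaust the spectral decomposition. This amounts to cross-checking the spectral theorem's abstract eigenbasis against the explicit count of eigenvalues (one simple eigenvalue $\lambda_0=0$, and for each $k\ge 1$ exactly the pair $\lambda_k^1\le\lambda_k^2$ with eigenvectors of index $k$) established in Theorem \ref{thm:SturmLiouville}. Since the spectral theorem asserts the eigenvectors span $L_0$ and the geometric analysis has already enumerated \emph{all} closed solutions by their index $k$, the two descriptions must agree, but making this identification airtight is the only genuinely nontrivial step; everything else is a direct invocation of the compactness result and the density of $\mathcal{C}^0$ in $L^2$.
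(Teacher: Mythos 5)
Your proposal is correct and is essentially the paper's intended argument: the paper states the corollary without proof, as an immediate consequence of the preceding lemmas (self-adjointness of $T$, compactness of $S$ on $L_0$) via the spectral theorem for compact self-adjoint operators, which is exactly the route you take. Your additional care --- verifying self-adjointness of $S$, passing to the weighted $L^2$-completion where the weight $[p,p']$ is bounded above and below, and cross-checking that the spectral eigenbasis coincides with the family $\{h_k^i\}$ enumerated in Theorem \ref{thm:SturmLiouville} together with the constant spanning $K$ --- simply makes explicit what the paper leaves implicit.
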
 

\subsection{Support functions of closed curves in $\H$}

Assume now that $h$ is the support function of a closed curve $\gamma\in\H$. Then $K$ represents the multiples of the unit ball $\mathcal{P}$, 
while $L_0$ defined by equation \eqref{eq:ZeroDualLength} represents curves $\gamma\in\H$ whose dual length is zero.
By a translation in the plane, we may write 
$$
h=h_0+\sum_{k\geq 2} a_{k}^i h_{k}^i.
$$
where $h_0\in K$. Denote by $C_0$ the subspace of $L_0$ generated by ${h_k^i}$, $k\geq 2$. 

Denote by $E_0$ the subspace of $C_0$ generated by ${h_k^i}$, $k$ even, and by $W_0$ the subspace of $C_0$ generated by ${h_k^i}$, $k>1$ odd.  
Then $E=E_0+K$ is the subspace of support functions of symmetric closed curves, while $W=W_0+K$ corresponds to support functions of constant width 
closed curves in $\H$.

\subsection{Convergence of involutes iteration}

In \cite{Craizer14}, it is proved that the iterations of involutes of a constant width curve converge to a constant curve. This result 
can be re-phrased by saying that the iterations $S^nh$ converge to $0$, for any $h\in W_0$. 
We can recover this result by using the basis $\{h_k^i\}$. In fact, we shall prove that for any $h\in C_0$, not necessarily in $W_0$, $S^nh$ converges to $0$, 
which means that the iteration of involutes of any closed curve $\gamma\in\H$ with dual length $0$ converges to $0$ (see \cite{AFITT} for the euclidean 
case).

Let $h$ be the support function of a closed curve $\gamma\in C_0$. We can write 
$$
h=\sum_{k\geq 2} a_k^i h_k^i.
$$
Then 
$$
S^n h= \sum_{k\geq 2} \frac{a_k^i}{(\lambda_k^i)^n} h_k^i.
$$
Since $\lambda _k^i>1$, for $k\geq 2$, we conclude that $S^nh$ converges to $0$. We remark that for $n$ 
large enough, the shape of the $n$-th involute of $\gamma$ is close to the shape of the cycloid corresponding to $h_k^i$ associated
with the smallest non-zero $a_k^i$ of $h$.

\subsection{Generalized Sturm-Hurwitz Theorem}

The classical Sturm-Hurwitz theorem says that a function whose first $N$ harmonics are zero must have at least $2N$ zeros (\cite{Martinez-Maure}).

\begin{thm}\label{thm:Sturm-Hurwitz}
Write
$$
h=\sum_{k\geq k_0} a_k^ih_k^i,
$$
for some $k_0\geq 2$. Then $h$ has at least $2k_0$ zeros.
\end{thm}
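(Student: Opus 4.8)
The plan is to exploit the spectral expansion of $h$ together with a variation-diminishing property of the solution operator $S=T^{-1}$, using the convergence of the iterates $S^nh$ already established. Throughout, let $Z(\cdot)$ denote the number of sign changes of a continuous $2\pi$-periodic function; since the number of zeros of such a function is at least its number of sign changes, it suffices to prove that $h$ has at least $2k_0$ sign changes.

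First I would reduce to the case in which $k_0$ is genuinely the lowest index occurring. Let $\mu$ be the smallest eigenvalue $\lambda_j^i$ whose coefficient $a_j^i$ is nonzero; since every present index satisfies $j\geq k_0$ and eigenvalues increase with the index, the common index $j_0$ of the $\mu$-eigenspace satisfies $j_0\geq k_0$. Multiplying the expansion $S^n h=\sum \frac{a_j^i}{(\lambda_j^i)^n}h_j^i$ by $\mu^n$, the terms with eigenvalue $>\mu$ decay, none have eigenvalue $<\mu$, and those with eigenvalue $=\mu$ persist, so $\mu^n S^n h\to P_\mu h$ in $\|\cdot\|_2$, where $P_\mu h$ is the nonzero orthogonal projection of $h$ onto the $\mu$-eigenspace. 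Writing $\mu^n S^n h=\mu\,S\!\left(\mu^{n-1}S^{n-1}h\right)$ and using that $S$ is bounded from $(L_0,\|\cdot\|_2)$ into $(C^0,\|\cdot\|_\infty)$ (the estimates of the compactness lemma) together with $SP_\mu h=\mu^{-1}P_\mu h$, this convergence is promoted to uniform convergence $\mu^n S^n h\to P_\mu h$ in $C^0$. The limit $P_\mu h$ is a nonzero eigenvector of eigenvalue $\mu$ and index $j_0$; by uniqueness of solutions of the second-order equation \eqref{eq:DiffEq} it cannot vanish together with its derivative, so all its zeros are simple, and by the index count in Theorem \ref{thm:SturmLiouville} it has exactly $2j_0\geq 2k_0$ of them, each a sign change.

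The key lemma I would then prove is that $S$ is variation-diminishing on $C_0$: $Z(Sh)\leq Z(h)$. Writing $g=Sh$, equation \eqref{eq:DiffEq} gives $\left(\frac{g'}{[q,q']}\right)'=-h[p,p']$, so $F:=\frac{g'}{[q,q']}$ is a periodic antiderivative of $-h[p,p']$ (periodicity of $F$ uses $h\in L_0$, while $g=Sh\in C_0$ is periodic by construction). Rolle's theorem on the circle says that for a smooth periodic function each sign change of the function forces a sign change of its derivative, so $Z(\phi)\leq Z(\phi')$. Applying this twice, and using $[p,p']>0$ and $[q,q']>0$, yields $Z(g)\leq Z(g')=Z(F)\leq Z(F')=Z(h)$. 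Iterating gives $Z(S^n h)\leq Z(h)$ for every $n$.

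Finally I would combine the two facts. Since $\mu>0$ we have $Z(S^n h)=Z(\mu^n S^n h)$, and since $\mu^n S^n h\to P_\mu h$ uniformly while $P_\mu h$ has $2j_0$ simple zeros, evaluating at sample points chosen strictly between consecutive zeros of $P_\mu h$ forces $\mu^n S^n h$ to have at least $2j_0\geq 2k_0$ sign changes for all large $n$. Hence $Z(h)\geq Z(S^n h)\geq 2k_0$, so $h$ has at least $2k_0$ zeros. The main obstacle is the variation-diminishing lemma — specifically, making the two applications of the circle version of Rolle's theorem rigorous without assuming \emph{a priori} that the zeros involved are simple — together with upgrading the $\|\cdot\|_2$-convergence of $\mu^n S^n h$ to the uniform convergence needed to read off the sign changes of the limit.
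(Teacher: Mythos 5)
Your proposal is correct and takes essentially the same route as the paper's own proof: a Rolle-type lemma showing that $S$ does not increase the number of zeros (the paper's $\#(Sh)\leq\#(h)$), combined with the power iteration $\mu^n S^n h\to$ the lowest-index eigencomponent, which has exactly $2j_0\geq 2k_0$ zeros. You merely make explicit several details the paper leaves implicit --- working with sign changes, upgrading the $\|\cdot\|_2$-convergence to uniform convergence, the simplicity of the limit eigenfunction's zeros, and the case where the first nonzero coefficient is not $a_{k_0}^1$ --- so no further commentary is needed.
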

\begin{proof}
Denote by $\#(h)$ the number of zeros of $h$. By Rolle's theorem, $\#(h')\geq \#(h)$. The same reasoning implies that $\#(Th)\geq \#(h)$. We conclude 
that $\#(Sh)\leq \#(h)$. Assume $a_{k_0}^1\neq 0$.
$$
(\lambda_{k_0}^1)^{n} S^nh =   a_{k_0}^1h_{k_0}^1+\frac{(\lambda_{k_0}^1)^{n}}{(\lambda_{k_0}^2)^{n}}a_{k_0}^2h_{k_0}^2 +\sum_{k>k_0} \frac{(\lambda_{k_0}^1)^{n}}{(\lambda_{k}^i)^{n}}a_{k}^ih_{k}^i  \ .
$$
Taking $n$ sufficiently large, the second member must have the same number of zeros as $h_{k_0}^1$, i.e., $2k_0$. Thus the number of zeros of $h$ is at least $2k_0$. 
\end{proof}

\subsection{A four and a six vertices theorem}

In this section we need to consider the dual subspaces, i.e., subspaces associated with the dual unit ball $q$, and we shall denote them by a $*$.

A vertex of $\gamma\in\H$ is a point where the derivative of the curvature radius vanishes.

\begin{thm}
Let $\gamma\in\H$ be a closed curve. Then $\gamma$ has at least four vertices.
\end{thm}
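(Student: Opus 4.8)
The plan is to reduce the four vertices theorem for a closed curve $\gamma\in\H$ to the Sturm-Hurwitz theorem (Theorem \ref{thm:Sturm-Hurwitz}) applied to the derivative of the curvature radius. A vertex of $\gamma$ is a zero of $r'$, so I must show that $r'$ has at least four zeros on $S^1$. The key observation is that the transformations relating $h$, $r$, and the evolutes are governed by the operator $T$, and the curvature radius satisfies $r = h - Th$ where $h$ is the support function and $K = \ker T$ is the space of constant functions. The natural strategy is to expand $r$ in the orthonormal basis $\{h_k^i\}$ and to understand how the low-order terms (the kernel $K$ and the eigenvalue-$1$ eigenvectors) contribute, since those are precisely the terms that could prevent us from applying Sturm-Hurwitz with $k_0=2$.

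First I would set up the dual picture carefully: since a vertex is a zero of $r'$, and since $r'/[q,q']$ is (up to sign) the support function of the evolute $\delta$, differentiating $r$ corresponds to passing to a dual problem. Concretely, I would write $\frac{r'}{[q,q']}$ and recognize it as a function on $S^1$ that can itself be expanded in the dual eigenbasis $\{(h_k^i)^*\}$ associated with the dual unit ball $q$. The crucial point is that the operation $h \mapsto -\frac{h'}{[q,q']}$ sends eigenvectors of the primal problem to eigenvectors of the dual problem while shifting indices appropriately, and in particular it kills the constant (the kernel $K$) and it must also be checked against the eigenvalue-$1$ space. So the heart of the argument is showing that $\frac{r'}{[q,q']}$, viewed in the dual basis, has no components in the dual kernel $K^*$ and no components corresponding to the lowest nonconstant dual eigenvalue, which would force $k_0\geq 2$ in the dual Sturm-Hurwitz theorem and hence at least $4$ zeros.

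The cleanest route is as follows. Write $r = r_0 + \sum_{k\geq 2} c_k^i h_k^i$ where $r_0\in K$ is constant. Then $r' = \sum_{k\geq 2} c_k^i (h_k^i)'$, and dividing by $[q,q']$ converts each $(h_k^i)'/[q,q']$ into (a multiple of) a dual eigenvector. The constant term $r_0$ drops out upon differentiation, which is exactly why the kernel $K$ poses no obstruction: a constant curvature radius contributes nothing to $r'$. I would then verify, using the relations \eqref{eq:CurvatureSupport} and the self-adjointness of $T$, that the resulting expansion of $\frac{r'}{[q,q']}$ in the dual basis starts at dual index $k_0\geq 2$, so that the dual Sturm-Hurwitz theorem applies and yields at least $2k_0 \geq 4$ zeros of $\frac{r'}{[q,q']}$, hence of $r'$, hence at least four vertices.

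The main obstacle I anticipate is bookkeeping the duality correctly: one must confirm that $h\mapsto -\frac{h'}{[q,q']}$ genuinely carries the primal eigenbasis to the dual eigenbasis with the same index, and that the lowest surviving index after differentiation is indeed at least $2$ and not $1$. In particular, the eigenvalue-$1$ (Minkowskian) space needs separate attention, since those cycloids are not closed; for a genuinely closed $\gamma$ the expansion of $r$ has no $\lambda=1$ component by Lemma \ref{lemma:Eigenvalue0} and the closedness analysis, and I would invoke that to discard it. Once the index shift and the vanishing of the lowest harmonics are pinned down, the inequality $\#(r') \geq 4$ follows immediately from the generalized Sturm-Hurwitz theorem applied in the dual setting.
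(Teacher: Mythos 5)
Your proposal is correct and follows essentially the same route as the paper: the paper likewise identifies vertices with zeros of the evolute's curvature radius $r_{\delta}=\frac{r'}{[q,q']}$ (equation \eqref{eq:CurvEvolute}), observes that $r_{\delta}$ lies in the dual space $C_0^*$ (differentiation kills the constant, and the $\lambda=1$ component of $r$ is absent because $r=(I-T)h$ by equation \eqref{eq:CurvatureSupport} annihilates the eigenvalue-$1$ part), and then applies Theorem \ref{thm:Sturm-Hurwitz} with $k_0=2$ to obtain four zeros. Two harmless slips to fix: the discarding of the $\lambda=1$ component should be justified by the factor $1-\lambda$ in equation \eqref{eq:CurvatureSupport} (or by Proposition \ref{Prop:Eigenvalue1}, non-closedness of the $\lambda=1$ cycloids), not by Lemma \ref{lemma:Eigenvalue0}, which concerns $\lambda=0$; and $\frac{r'}{[q,q']}$ is the curvature radius of $\delta$ rather than its support function, though the paper itself notes the two transformations coincide up to sign.
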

\begin{proof}
By equation \eqref{eq:CurvEvolute}, zeros of the curvature radius $r_{\delta}$ of the evolute $\delta$ correspond to vertices of $\gamma$. 
But since $r_{\delta}\in C_0^*$, theorem \ref{thm:Sturm-Hurwitz} says that $r_{\delta}$ has at least $4$ zeros. 
\end{proof}

\begin{thm}
Let $\gamma\in\H$ be a closed curve with constant width. Then $\gamma$ has at least six vertices.
\end{thm}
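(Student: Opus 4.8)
The plan is to copy the strategy of the four vertices theorem but to exploit the extra parity that constant width forces on $r_\delta$, thereby killing the lowest admissible (index-$2$) dual harmonic and pushing the first surviving harmonic up to index $3$. As in that proof, equation \eqref{eq:CurvEvolute} identifies the vertices of $\gamma$ (the zeros of $r'$) with the zeros of the curvature radius $r_\delta=\frac{r'}{[q,q']}$ of the evolute $\delta$, and $r_\delta\in C_0^*$: it has zero dual mean because $\int_0^{2\pi} r_\delta[q,q']\,d\theta=\int_0^{2\pi} r'\,d\theta=0$, and it carries no index-$1$ component since $\delta$ is closed while the $\lambda=1$ Minkowskian cycloids are not (Proposition \ref{Prop:Eigenvalue1}). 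The bound $4=2\cdot2$ of the four vertices theorem comes from allowing an index-$2$ term; the goal is to show that constant width removes it.

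First I would record the relevant parity. Since the unit circle is centrally symmetric, $p(\theta+\pi)=-p(\theta)$ and $q(\theta+\pi)=-q(\theta)$, so both $[p,p']$ and $[q,q']$ are $\pi$-periodic. Consequently the operator $T$ maps anti-symmetric functions (those with $h(\theta+\pi)=-h(\theta)$) to anti-symmetric functions, and it annihilates constants. Because $\gamma$ has constant width, its support function lies in $W=W_0+K$, so $h=h_0+\tilde h$ with $h_0$ constant and $\tilde h(\theta+\pi)=-\tilde h(\theta)$. By \eqref{eq:CurvatureSupport} the curvature radius is $r=(I-T)h=h_0+(I-T)\tilde h$, and $(I-T)\tilde h$ is again anti-symmetric; hence $r'$ is anti-symmetric.

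Next I would transfer this parity to $r_\delta$. Dividing the anti-symmetric $r'$ by the $\pi$-periodic $[q,q']$ gives $r_\delta(\theta+\pi)=-r_\delta(\theta)$, so $r_\delta$ is anti-symmetric. In the dual eigenbasis the anti-symmetric functions are precisely the span of the odd-index eigenvectors $h_k^{i,*}$, exactly the parity recorded in the proof of Theorem \ref{thm:SturmLiouville} (odd eigenvectors satisfy $h_k^i(\theta+\pi)=-h_k^i(\theta)$, even ones are $\pi$-periodic). Combining anti-symmetry with $r_\delta\in C_0^*$, which already excludes the constant and the index-$1$ modes, yields $r_\delta\in W_0^*$, that is
$$
r_\delta=\sum_{k\geq 3,\ k\ \mathrm{odd}} b_k^i\,h_k^{i,*}.
$$
Its expansion therefore starts no earlier than $k_0=3$, and the generalized Sturm--Hurwitz theorem (Theorem \ref{thm:Sturm-Hurwitz}, in its dual form) gives at least $2k_0=6$ zeros of $r_\delta$, hence at least six vertices of $\gamma$. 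The degenerate case $r_\delta\equiv0$ corresponds to $\gamma$ being a homothet of the unit ball, for which every point is trivially a vertex.

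The main obstacle is the careful bookkeeping of the two parities together with the passage to the dual side. One must check that $T$ genuinely preserves the anti-symmetric subspace — which rests exactly on the $\pi$-periodicity of $[p,p']$ and $[q,q']$ inherited from central symmetry of the unit circle — and that the symmetric/anti-symmetric splitting of the eigenbasis carries over verbatim to the dual operator, so that \emph{``anti-symmetric and in $C_0^*$''} really does mean \emph{``spanned by odd dual eigenvectors of index $\geq3$''}. Once these two facts are secured, the zero count is immediate from Theorem \ref{thm:Sturm-Hurwitz}.
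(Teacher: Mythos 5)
Your proof is correct, and it reaches the paper's key intermediate fact --- $r_\delta\in W_0^*$ --- by a genuinely different route. The paper gets there in one line: it cites \cite{Craizer14} for the statement that the evolute of a constant width curve is again of constant width on the dual side (so $h_\delta\in W_0^*$), and then uses $r_\delta=T^*h_\delta$ together with the $T^*$-invariance of $W_0^*$, which is immediate from the spectral decomposition; from that point on the two arguments coincide, both invoking Theorem \ref{thm:Sturm-Hurwitz} with $k_0=3$ to get $2k_0=6$ zeros of $r_\delta$, i.e., six vertices. You instead verify the membership by hand: central symmetry makes $[p,p']$ and $[q,q']$ $\pi$-periodic, so $T$ preserves anti-symmetric functions; $r=(I-T)h$ then inherits the form ``constant plus anti-symmetric'' from the constant-width hypothesis, hence $r_\delta=r'/[q,q']$ is anti-symmetric, which kills the constant and all even-index dual modes. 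The remaining danger --- the index-$1$ modes $[v,p(\theta)]$, which are themselves anti-symmetric --- is correctly excluded by closedness: pairing in the dual inner product gives $\int_0^{2\pi} r_\delta\,[v,p]\,[q,q']\,d\theta=\int_0^{2\pi} r'\,[v,p]\,d\theta=-\bigl[v,\int_0^{2\pi}\gamma'\,d\theta\bigr]=0$, which is the precise content of your appeal to Proposition \ref{Prop:Eigenvalue1} (a nonzero $\lambda=1$ component would force the evolute not to close). What your route buys is self-containedness: it replaces the external citation with a short parity computation, and it makes explicit a point the paper leaves tacit in \emph{both} vertex theorems, namely why $r_\delta$ has no index-$1$ component. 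What the paper's route buys is conceptual economy: ``the evolute preserves constant width'' is a geometric fact of independent interest, and the invariance argument transfers verbatim to other $T^*$-invariant subspaces. Your explicit handling of the degenerate case $r_\delta\equiv 0$ (where $\gamma$ is a homothet of the unit circle and Sturm--Hurwitz is vacuous) is a legitimate extra care the paper omits.
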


\begin{proof}
If $\gamma\in W_0$,  then $\delta\in W_0^*$ (see \cite{Craizer14}) and so $h_{\delta}\in W_0^*$. Now equation \eqref{eq:CurvatureSupport} says that
$r_{\delta}=T^{*}h_{\delta}$, and since $W_0^*$ is $T^{*}$-invariant, we conclude that $r_{\delta}\in W_0^*$.
By theorem \ref{thm:Sturm-Hurwitz}, $r_{\delta}$ has at least $6$ zeros.  
\end{proof}


\begin{thebibliography}{n}

\bibitem{AFITT} M.Arnold, D.Fuchs, I.Izmestiev, S.Tabachnikov and E.Tsukerman: {\it Iterating evolutes and involutes}, Arxiv:1510.07742.

\bibitem{CodLev} E.A.Coddington and N.Levinson: {\it Theory of Ordinary Differential Equations}, McGraw Hill, 1955.

\bibitem{Craizer14} M.Craizer: {\it Iteration of involutes of constant width curves in the Minkowski plane}, Beitr. Alg.Geom., 55, 479-496, 2014.

\bibitem{CTB} M.Craizer, R.C.Teixeira, and V.Balestro: {\it Discrete Cycloids in a Normed Plane}, pre-print, 2017.

\bibitem{Takahashi1} T.Fukunaga and M.Takahashi: {\it Evolutes of fronts in the Euclidean plane}, J.Singularities, 10, 92-107, 2014.


\bibitem{Kac} M. Kac: {\it Can one hear the shape of a drum?}, Amer.Math.Monthly, 73(4), part 2, 1966.

\bibitem{Martinez-Maure} Y.Martinez-Maure: {\it Les multih\'erissons et le th\'eor\`eme de Sturm-Hurwitz}, Arch.Math., 80, 79-86, 2003.

\bibitem{Ma-Sw-We} H.Martini, K.J.Swanepoel and G.Weiss: {\it The geometry of Minkowski spaces - a survey. Part I}, Expos.Math., 19, 97-142, 2001.

\bibitem{Ma-Sw} H.Martini and K.J.Swanepoel: {\it The geometry of Minkowski spaces - a survey. Part II}, Expos.Math., 22, 93-144, 2004.

\bibitem{Ma-Se} H. Martini and S. Wu: {\it Classical curve theory in normed planes}, Comput. Aided Geom. Design 31 (2014), no. 7-8, 373?397, 

\bibitem{Petty1} C.M.Petty: {\it On the geometry of the Minkowski plane}, Riv. Mat. Univ. Parma, 6, 269-292, 1955.

\bibitem{Petty2} C.M.Petty and J.E.Barry:{\it A geometrical approach to the second-order linear differential equation}, Canadian J.Math., 14, 349-358, 1962.


\bibitem{Tabach97} S.Tabachnikov: {\it Parameterized plane curves, Minkowski caustics, Minkowski vertices and conservative line fields}, L'Enseign. Math., 43, 3-26, 1997.

\bibitem{Thompson96} A.C.Thompson: {\it Minkowski Geometry}. Encyclopedia of Mathematics and its Applications, 63, Cambridge University Press, 1996.

\bibitem{Zettl} A.Zettl: {\it Sturm-Liouville Theory}, AMS Math.Surveys, 121, 2000.

\end{thebibliography}
\end{document}